\definecolor{webgreen}{rgb}{0,.5,0}
\definecolor{webbrown}{rgb}{.6,0,0}
\theoremstyle{plain}
\newtheorem{corollary}{Corollary}
\newtheorem{lemma}{Lemma}
\begin{document}
	
\begin{center}
	\vskip 1cm{\LARGE\bf 
		Collatz Cycles and $\boldsymbol{3n+c}$ Cycles
	}
	\vskip 1cm
	\large
	Darrell Cox\\
	Grayson County College\\
	Denison, TX 75020 \\
	USA\\
	\ \\
	Sourangshu Ghosh\\
	Indian institute of Technology Kharagpur\\
	Kharagpur, West Bengal 721302\\
	India\\
	\ \\
	Eldar Sultanow\\
	Potsdam University\\
	14482 Potsdam\\
	Germany\\
\end{center}

\begin{abstract}	
 \noindent The next element in the $3n+1$ sequence is defined to be $(3n+1)/2$ if $n$ is odd or $n/2$ otherwise. The Collatz conjecture states that no matter what initial value of $n$ is chosen, the sequence always reaches 1 (where it goes into the repeating sequence (1,2,1,2,1,2,\ldots). The only known Collatz cycle is (1,2). Let $c$ be an odd integer not divisible by $3$. Similar cycles exist for the more general $3n+c$ sequence. The $3n+c$ cycles are commonly grouped according to their length and number of odd elements. The smallest odd element in one of these cycles is greater than the smallest odd elements of the other cycles in the group. A \textit{parity vector} corresponding to a cycle consists of 0's for the even elements and 1's for the odd elements. A parity vector generated by the ceiling function is used to determine this smallest odd element. Similarly, the largest odd element in one of these cycles is less than the largest odd elements of the other cycles in the group. A parity vector generated by the floor function is used to determine this largest odd element. This smallest odd element and largest odd element appear to be in the same cycle. This means that the parity vector generated by the floor function can be rotated to match the parity vector generated by the ceiling function. Two linear congruences are involved in this rotation. The natural numbers generated by one of these congruences appear to be uniformly distributed (after sorting). This sequence has properties similar to those of the zeta function zeros. 
\end{abstract}

\newpage
\section{Introduction}
 Halbeisen and Hungerbühler~\cite{hh} found new techniques which allow a refined analysis of rational (and hence integer) Collatz cycles. In particular, they prove optimal estimates for the length of a cycle having positive elements in terms of its minimum. Their main results are reproduced here since they are directly applicable to $3n+c$ cycles. Most lemmas are omitted.

\section{Halbeisen and Hungerbühler's Results for Collatz Cycles}
For $x\in\mathbb{R}$ let $g_{0}(x)=x/2$ and $g_{1}(x)=(3x+1)/2$. Let $\mathbb{Q}[(2)]$ denote the local ring of fractions of $\mathbb{Z}$ at the prime ideal(2), i.e. the domain of all rational numbers having (written in least terms) an odd denominator. A number $p/q\in\mathbb{Q}[(2)]$ with odd $q$ is considered even or odd according to the parity of the numerator $p$. Then the Collatz sequence generated by $x_{0}\in\mathbb{Q}[(2)]$ is defined by $x_{n}=g_{0}(x_{n-1})$ if $x_{n-1}$ is even or $g_{1}(x_{n-1})$ if $x_{n-1}$ is odd for $n\in\mathbb{N}$. Let $S_{l,n}$ denote the set of all 0-1 sequences of length $l$ containing exactly $n$ ones, $S_{l}=\cup^{l}_{n=0}S_{l,n}$ and $S=\cup_{l\in\mathbb{N}}S_{l}$. With every $s=(s_{1},\ldots,s_{l})\in S_{l}$ we associate the affine function $\phi_{s} :\mathbb{R}\rightarrow \mathbb{R}$, $\phi_{s}=g_{s_{l}}\circ\ldots\circ g_{s_{2}} \circ g_{s_{1}}$. A sequence
$(x_{0},\ldots,x_{l})$ of real numbers $x_{i}$ is called a \textit{pseudo-cycle of length $l$} if there exists $s=(s_{1},\ldots,s_{l})\in S_{l}$ such that (1) $\phi_{s}(x_{0})=x_{0}\in\mathbb{Q}[(2)]$ and (2) $g_{s_{i+1}}(x_{i})=x_{i+1}$ for $i=0,\ldots,l-1$. \\

\noindent Notice that if $p/q\in \mathbb{Q}$ with $2^{r}|q$ then $2^{r}|\tilde{q}$ where $\tilde{q}$ denotes the denominator of $g_{i}(p/q)$ $(i=0,1)$. Hence every element of a pseudo-cycle is in $\mathbb{Q}[(2)]$. Thus, if $p/q$ and $g_{i}(p/q)=\tilde{p}/\tilde{q}$ are consecutive elements of a pseudo-cycle, then $i=0$ if $p$ is even (since else $\tilde{p}/\tilde{q} \notin\mathbb{Q}[(2)]$) or $i=1$ if $p$ is odd (since else $\tilde{p}/\tilde{q} \notin\mathbb{Q}[(2)]$). The conclusion of this observation is given by the following lemma~\ref{Lemma:1}.

\begin{lemma}
The set of pseudo-cycles coincides with the set of Collatz cycles in 
$\mathbb{Q}[(2)]$. Cycles consist of either positive or negative elements.
\label{Lemma:1}
\end{lemma}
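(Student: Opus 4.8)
The plan is to split the statement into two independent parts: the identification of pseudo-cycles with Collatz cycles, and the sign dichotomy. For the first part I would prove the two inclusions separately. The easy direction is that every Collatz cycle is a pseudo-cycle: given a Collatz cycle $(x_0,\ldots,x_l)$ with $x_l=x_0$, I would define the parity vector $s=(s_1,\ldots,s_l)$ by letting $s_i=1$ when $x_{i-1}$ is odd and $s_i=0$ when $x_{i-1}$ is even. By the definition of the Collatz sequence one then has $g_{s_i}(x_{i-1})=x_i$, which is condition (2), and composing these equalities around the loop yields $\phi_s(x_0)=x_l=x_0$, which is condition (1); hence $(x_0,\ldots,x_l)$ is a pseudo-cycle. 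For the converse I would start from a pseudo-cycle $(x_0,\ldots,x_l)$ with associated vector $s$ and invoke the fact, recorded just before the lemma, that every element of a pseudo-cycle lies in $\mathbb{Q}[(2)]$. Granting this, the parity observation preceding the lemma applies at each step: since $x_{i+1}=g_{s_{i+1}}(x_i)\in\mathbb{Q}[(2)]$, one is forced to take $s_{i+1}=0$ when $x_i$ is even and $s_{i+1}=1$ when $x_i$ is odd, for any other choice would push $x_{i+1}$ out of $\mathbb{Q}[(2)]$. Thus every step of the pseudo-cycle agrees with the Collatz map, and because $x_l=x_0$ the sequence is a genuine Collatz cycle.

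The step I expect to be the real content is the claim that every element of a pseudo-cycle already sits in $\mathbb{Q}[(2)]$, rather than only the distinguished element $x_0$. Here I would lean on the denominator estimate stated before the lemma: under either $g_0$ or $g_1$ the exponent of $2$ dividing the reduced denominator never decreases. Writing $v_i$ for this exponent along the pseudo-cycle, monotonicity gives $v_0\le v_1\le\cdots\le v_l$, while the cyclic condition $x_l=x_0$ forces $v_l=v_0$. Hence all the $v_i$ coincide, and since $x_0\in\mathbb{Q}[(2)]$ gives $v_0=0$, every $x_i$ has odd denominator, i.e. lies in $\mathbb{Q}[(2)]$. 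This is precisely the point at which the periodicity of the orbit, as opposed to a single initial condition, is essential.

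For the sign dichotomy I would argue dynamically rather than through the explicit fixed point of the affine map $\phi_s$. The key observation is that both generators preserve positivity: if $x>0$ then $g_0(x)=x/2>0$ and $g_1(x)=(3x+1)/2>0$, so the set $\{x:x>0\}$ is forward invariant under the Collatz map. Consequently, if a cycle contains even a single positive element, then following the orbit forward keeps every successor positive, and since in a cycle the forward orbit of any element visits all of them, the entire cycle is positive. The contrapositive then shows that a cycle containing a negative element can contain no positive element, so it is entirely non-positive; the only non-positive value that can recur by itself is $0$, and $g_0(0)=0$ yields merely the trivial cycle $\{0\}$, so a nontrivial such cycle is entirely negative. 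Hence no cycle mixes strictly positive and strictly negative elements, which is the assertion.

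The asymmetry worth flagging in the write-up is that $g_1$ can send a negative number to a positive one (for instance a small negative odd element), so negativity is \emph{not} forward invariant; the argument succeeds precisely because positivity is, and within a cycle that one-sided invariance already suffices to rule out mixed signs. I therefore expect the only genuine difficulty to lie in the denominator-valuation argument of the second paragraph, with everything else being a careful but routine unwinding of the definitions.
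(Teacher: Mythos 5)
Your proof is correct, and it is worth separating its two halves when comparing it with the paper. For the identification of pseudo-cycles with Collatz cycles you take essentially the same route the paper does: the paper's entire argument is the observation stated immediately before the lemma, namely that the power of $2$ dividing the reduced denominator cannot decrease under $g_0$ or $g_1$, whence every element of a pseudo-cycle lies in $\mathbb{Q}[(2)]$, after which parity forces the branch at each step. Your valuation argument $v_0\le v_1\le\cdots\le v_l=v_0$ is exactly the content hidden in the paper's ``hence'': the monotonicity alone proves nothing without the cyclic return $x_l=x_0$, and you are right that this is where periodicity is essential. The sign dichotomy is where you genuinely diverge: the paper (which reproduces Halbeisen--Hungerb\"uhler's lemma with proofs mostly omitted) gives no argument for it at all, and the route suggested by the paper's own machinery is algebraic rather than dynamical --- every element of a cycle with parity vector $s$ is, by Lemma~\ref{lemma:2} and uniqueness, the fixed point $\varphi(t)/(2^{l}-3^{n})$ of $\phi_t$ for some rotation $t$ of $s$, and since $\varphi\ge 0$ all elements share the sign of $2^{l}-3^{n}$. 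Your forward-invariance argument is more elementary and self-contained (it needs none of the $\varphi$ formalism), and your observation that only positivity, not negativity, is forward invariant (e.g.\ $g_1(-1/5)=1/5$ in $\mathbb{Q}[(2)]$) is exactly the right point to flag. The algebraic route buys more, however: it identifies the common sign as the sign of $2^{l}-3^{n}$, and exhibits the degenerate all-zero cycle as precisely the case $\varphi(s)=0$, i.e.\ $s$ identically zero --- an edge case that the lemma's wording ignores and that you handle correctly by hand.
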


\noindent The function $\varphi : S\rightarrow \mathbb{N}$ will be defined recursively by $\varphi(\O)=0$, $\varphi(s_{0})=\varphi(s)$, and $\varphi(s_{1})=3\varphi(s)+2^{l(s)}$ where $s$ denotes an arbitrary element of $S$ and $l(s)$ the length of $s$. The function $\varphi$ is computed explicitly by $\varphi(s)=\sum_{j=1}^{l(s)}s_{j}3^{s_{j+1}+\ldots+s_{l(s)}}2^{j-1}$. \\

\noindent A consequence of the above definition is the decomposition formula $\varphi(s\bar{s})=3^{n(\bar{s})}\varphi(s)+2^{l(s)}\varphi(\bar{s})$. Here $s\bar{s}$ is the concatenation of $s$, $\bar{s}\in S$, and $n(s)$ denotes the number of 1's in the sequence $s$. The next lemma~\ref{lemma:2} shows how $\varphi$ is used to explicitly compute the function $\phi_{s}$.

\begin{lemma}
(Lagarias~\cite{la}). For arbitrary $s\in S$, $\phi_{s}(x)=\frac{3^{n(s)}x+\varphi(s)}{2^{l(s)}}$
and hence for every $s\in S$ there exists a unique $x_{0}\in \mathbb{Q}[(2)]$ which generates a Collatz cycle in $\mathbb{Q}[(2)]$ of length $l(s)$ and which coincides with the pseudo-cycle generated by $s$. The value $x_{0}$ is given by $x_{0}=\frac{\varphi(s)}{2^{l(s)}-3^{n(s)}}$.
\label{lemma:2}
\end{lemma}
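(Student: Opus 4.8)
The plan is to split the statement into two independent pieces: first establish the closed form for $\phi_s$, and then read off the fixed point from it.

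First I would prove the identity $\phi_s(x)=\frac{3^{n(s)}x+\varphi(s)}{2^{l(s)}}$ by induction on the length $l(s)$, using the recursive definition of $\varphi$. The base case is the empty sequence, where $\phi_{\emptyset}=\mathrm{id}$ and $n(\emptyset)=l(\emptyset)=\varphi(\emptyset)=0$, so both sides equal $x$. For the inductive step, write a sequence of length $l(s)+1$ as $s\,b$ with $b\in\{0,1\}$ appended at the end, so that $\phi_{sb}=g_b\circ\phi_s$. Applying $g_0(x)=x/2$ gives $\phi_{s0}(x)=\frac{3^{n(s)}x+\varphi(s)}{2^{l(s)+1}}$, which matches the claimed formula because $n(s0)=n(s)$, $l(s0)=l(s)+1$, and $\varphi(s0)=\varphi(s)$. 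Applying $g_1(x)=(3x+1)/2$ gives $\phi_{s1}(x)=\frac{3^{n(s)+1}x+3\varphi(s)+2^{l(s)}}{2^{l(s)+1}}$, which matches because $n(s1)=n(s)+1$ and $\varphi(s1)=3\varphi(s)+2^{l(s)}$. This closes the induction.

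With the closed form available, the fixed-point equation $\phi_s(x_0)=x_0$ becomes the affine equation $3^{n(s)}x_0+\varphi(s)=2^{l(s)}x_0$, that is, $(2^{l(s)}-3^{n(s)})x_0=\varphi(s)$. For any $s$ with $l(s)\geq 1$ one has $2^{l(s)}\neq 3^{n(s)}$, since a power of two and a power of three coincide only when both equal $1$, by unique factorization; equivalently $\phi_s$ is affine with slope $3^{n(s)}/2^{l(s)}\neq 1$ and so has exactly one fixed point. Hence the coefficient $2^{l(s)}-3^{n(s)}$ is nonzero and the unique solution is $x_0=\frac{\varphi(s)}{2^{l(s)}-3^{n(s)}}$. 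To see $x_0\in\mathbb{Q}[(2)]$, observe that the denominator $2^{l(s)}-3^{n(s)}$ is odd (even minus odd), so the fraction retains an odd denominator even when reduced to lowest terms.

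Finally I would check that this $x_0$ genuinely generates the pseudo-cycle attached to $s$: setting $x_{i+1}=g_{s_{i+1}}(x_i)$ for $i=0,\ldots,l-1$ yields $x_l=\phi_s(x_0)=x_0$, so conditions (1) and (2) in the definition of a pseudo-cycle hold, and by the observation preceding Lemma~\ref{Lemma:1} each $x_i$ remains in $\mathbb{Q}[(2)]$. Lemma~\ref{Lemma:1} then upgrades this pseudo-cycle to a Collatz cycle of length $l(s)$. I do not expect a serious obstacle: the only delicate points are the non-vanishing of $2^{l(s)}-3^{n(s)}$ and the oddness of the denominator, both elementary, while the genuinely substantive identification of pseudo-cycles with Collatz cycles has already been absorbed into Lemma~\ref{Lemma:1}. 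The one caveat to state explicitly is that the existence-and-uniqueness claim is meaningful only for $l(s)\geq 1$, the empty sequence giving the degenerate $\phi_{\emptyset}=\mathrm{id}$.
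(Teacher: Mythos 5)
Your proposal is correct and follows essentially the same route as the paper: an induction on $l(s)$ that peels off the last symbol of the sequence, with the two cases $g_0$ and $g_1$ handled via the recursive definition of $\varphi$, and then the fixed-point equation read off from the affine form. You are in fact more complete than the paper's own proof, which stops after the induction and leaves the non-vanishing of $2^{l(s)}-3^{n(s)}$, the oddness of the denominator, and the appeal to Lemma~\ref{Lemma:1} implicit.
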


\begin{proof}
The proof is by induction with respect to $l(s)$. (1) $l(s)=1$: This is checked from the definition. (2) $l(s)>1$: If $s=\bar{s}_{0}$ then $\phi_{\bar{s}0}(x)=\frac{\phi_{\bar{s}}(x)}{2}=\frac{3^{n(\bar{s})}x+\varphi(\bar{s})}{2\cdot 2^{l(\bar{s})}}=\frac{3^{n(s)}x+\varphi(s)}{2^{l(s)}}$. The case $s=\bar{s}1$ is analogous.
\end{proof}
 
\noindent For $s\in S_{l}$ let $\sigma(s)$ denote the orbit of $s$ in $S_{l}$ generated by the left-shift permutation $\lambda_{l}$ : $(s_{1},\ldots,s_{l})\rightarrow (s_{2},..,s_{l},s_{1})$, i.e. $\sigma(s):= \{\lambda_{l}^{k}(s)$ : $k=1,\ldots,l\}$. Furthermore, let $M_{l,n}$ denote $\max_{s\in S_{l,n}}$$\{{\min_{t\in \sigma(s)} \varphi(t)}\}$. \\

\noindent Now suppose the Collatz conjecture is verified for all initial values $x_{0}\le m$. If one can then show that $\forall n,l<L: \frac{M_{l,n}}{2^{l}-3^{n}} \le m$, it follows that the length of a Collatz cycle in $\mathbb{N}$ which does not contain 1 is at least $L$. \\

\noindent Let $\tilde{s}$ denote the sequence for which $\varphi$ attains the value $M_{l,n}$.

\begin{lemma}
Let $n\le l$ be natural numbers. Let $\tilde{s}_{i}:=\lceil in/l\rceil - \lceil(i-1)n/l\rceil$ (for $1\le i\le l$). Then $\varphi(\bar{s})=\min_{t\in \sigma(\bar{s})}\{\varphi(t)\}=M_{l,n}$.
\end{lemma}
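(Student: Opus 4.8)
The plan is to translate the statement into a purely combinatorial optimization over cyclic binary words and attack it in two stages: first pin down, for a fixed word, the rotation that minimizes $\varphi$, and then show that the ceiling-word $\tilde s$ maximizes that minimum. First I would record a working form of $\varphi$. If the ones of $s\in S_{l,n}$ sit at positions $p_1<\cdots<p_n$, the explicit formula collapses to
\[
\varphi(s)=\sum_{k=1}^{n}3^{\,n-k}\,2^{\,p_k-1},
\]
since the exponent of $3$ attached to the one at $p_k$ is exactly the number $n-k$ of ones to its right. Two reductions then come cheaply. First, $\tilde s\in S_{l,n}$: the entries lie in $\{0,1\}$ because $n/l\le 1$, the count $\sum_{i=1}^{l}\tilde s_i=\lceil n\rceil-\lceil 0\rceil=n$ telescopes, and $\tilde s_1=\lceil n/l\rceil=1$. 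Second, the orbit-minimum of $\varphi$ is attained at a rotation beginning with a $1$: if $s_1=0$ then $\lambda_l(s)$ shifts every one left by one place and appends a harmless $0$, so $\varphi(\lambda_l(s))=\tfrac12\varphi(s)<\varphi(s)$. Hence it suffices to compare the $n$ rotations starting at a one; writing $d_1,\dots,d_n$ for the cyclic gaps between consecutive ones and $D_k=\sum_{i<k}d_i$, each such rotation contributes $\sum_{k=1}^{n}3^{\,n-k}2^{D_k}$, and the whole problem becomes one about the cyclic word of gaps.

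For the ceiling word the $k$-th one sits at $p_k=\lfloor(k-1)l/n\rfloor+1$, so $D_k=\lfloor(k-1)l/n\rfloor$ and $\varphi(\tilde s)=\sum_{k=1}^{n}3^{\,n-k}2^{\,\lfloor(k-1)l/n\rfloor}$. The first assertion, $\varphi(\tilde s)=\min_{t\in\sigma(\tilde s)}\varphi(t)$, I would prove by exhibiting this reading as the \emph{lower Christoffel path} of the balanced word: for a balanced word the minimal sum of any $k-1$ cyclically consecutive gaps equals $\lfloor(k-1)l/n\rfloor$, and there is a single rotation, namely the one singled out by the ceiling (equivalently floor) formula, whose partial sums $D_k$ realize all of these minima simultaneously. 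Since $x\mapsto 2^{x}$ is increasing, a rotation minimizing every $D_k$ minimizes $\varphi$ term by term, which gives the first equality.

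For the second equality $\varphi(\tilde s)=M_{l,n}$ I would run an exchange (majorization) argument. The gap multiset of $\tilde s$ uses only the two values $\lfloor l/n\rfloor$ and $\lceil l/n\rceil$, hence is majorized by the gap multiset of every other $s\in S_{l,n}$. Given an $s$ whose gaps are not balanced, I would pick two with $d_a\ge d_b+2$, replace them by $d_a-1,\,d_b+1$, and, using convexity of $x\mapsto 2^{x}$ on the weighted partial-sum expression, show that this balancing move does not decrease $\min_{t\in\sigma}\varphi(t)$. Iterating the move drives $s$ to $\tilde s$ and yields $\min_{t\in\sigma(s)}\varphi(t)\le\varphi(\tilde s)$ for every $s$, so $\tilde s$ attains the outer maximum $M_{l,n}$.

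The hard part will be precisely this last monotonicity: the rotation achieving $\min_{t\in\sigma}\varphi$ need not be stable under a balancing move, so one cannot merely differentiate a single fixed sum. I would handle it by comparing the \emph{new} orbit-minimum against the image under the move of the \emph{old} minimizing rotation, and then bounding the change of $\sum_{k}3^{\,n-k}2^{D_k}$ directly, exploiting that the dominant weight $3^{\,n-1}$ always multiplies the fixed leading term $2^{D_1}=1$. As a fallback I would instead induct on $l$ (or on the continued-fraction expansion of $n/l$) through the decomposition formula $\varphi(s\bar s)=3^{\,n(\bar s)}\varphi(s)+2^{\,l(s)}\varphi(\bar s)$, which dovetails with the recursion $\varphi(s_0)=\varphi(s)$, $\varphi(s_1)=3\varphi(s)+2^{\,l(s)}$ and with the self-similar structure of the balanced word $\tilde s$.
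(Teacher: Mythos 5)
Your treatment of the first equality is essentially sound: the positional formula $\varphi(s)=\sum_{k=1}^{n}3^{n-k}2^{p_k-1}$, the halving observation $\varphi(\lambda_l(s))=\tfrac12\varphi(s)$ when $s_1=0$, and the claim that $\tilde s$ realizes the minimal partial gap sums $D_k=\lfloor (k-1)l/n\rfloor$ all check out; moreover the last fact needs no appeal to Christoffel-word theory, since with $d_k=\lfloor kl/n\rfloor-\lfloor (k-1)l/n\rfloor$ it follows in one line from $\lfloor a\rfloor+\lfloor b\rfloor\le\lfloor a+b\rfloor$ (with a ceiling variant for the wrap-around sums). Bear in mind that the paper itself offers no proof of this lemma --- it is reproduced from Halbeisen and Hungerb\"uhler with ``most lemmas omitted'' --- so the only possible comparison is with their argument, and the relevant question is whether your plan closes the second equality $\varphi(\tilde s)=M_{l,n}$.

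It does not, and the failure is not just a missing detail: your balancing move can strictly \emph{decrease} the orbit minimum. Take $(l,n)=(8,4)$ and gaps $(1,3,1,3)$, i.e.\ the word $11001100$; its two rotations starting with a one give $\varphi=125$ and $\varphi=275$, so the orbit minimum is $125$. Balancing the pair $d_2=3$, $d_3=1$ into $2,2$ yields gaps $(1,2,2,3)$, i.e.\ the word $11010100$, with $\varphi=27+18+24+32=101<125$. So ``pick two gaps with $d_a\ge d_b+2$ and balance them'' is false as stated; at best some carefully chosen move might work, and you neither identify nor prove which. A second defect: your iteration halts once the gap multiset uses only the values $\lfloor l/n\rfloor,\lceil l/n\rceil$, but that condition does not pin down $\sigma(\tilde s)$. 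For $(l,n)=(6,4)$ both cyclic arrangements $(1,2,1,2)$ and $(1,1,2,2)$ are fixed points of your move, yet only the first is a rotation of $\tilde s$ (orbit minimum $85=M_{6,4}$), while the second (word $111010$) has orbit minimum at most $73$; so the exchange process need not terminate at $\tilde s$ at all. The clean way to finish --- and the idea behind the cited proof --- is a cycle-lemma/staircase argument that avoids exchanges entirely: given any $s\in S_{l,n}$, start the rotation $t$ just after a position where $j\mapsto\#\{i\le j:\,s_i=1\}-jn/l$ attains its minimum over the cycle; then every length-$j$ prefix of $t$ contains at least $\lceil jn/l\rceil$ ones, hence the $k$-th one of $t$ occurs no later than the $k$-th one of $\tilde s$, and your own formula gives $\varphi(t)\le\varphi(\tilde s)$ term by term, i.e.\ $\min_{t\in\sigma(s)}\varphi(t)\le\varphi(\tilde s)$ for every $s$, which together with your first part yields $M_{l,n}=\varphi(\tilde s)$.
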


\begin{corollary}
For every $l$ and $n\le l$ we have
$M_{l,n}=\sum_{j=1}^{l}(\lceil jn/l\rceil- \lceil(j-1)n/l\rceil)2^{j-1}3^{n-\lceil jn/l\rceil}$.
\end{corollary}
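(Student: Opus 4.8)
The plan is to obtain the corollary as a direct computation: I would simply substitute the extremal sequence $\tilde{s}$ supplied by the preceding lemma into the closed form for $\varphi$. By that lemma the sequence $\tilde{s}$ with $\tilde{s}_i = \lceil in/l\rceil - \lceil(i-1)n/l\rceil$ satisfies $\varphi(\tilde{s}) = M_{l,n}$, so it suffices to evaluate $\varphi(\tilde{s})$ using the explicit formula $\varphi(s) = \sum_{j=1}^{l(s)} s_j 3^{s_{j+1}+\cdots+s_{l(s)}} 2^{j-1}$ recorded earlier.

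First I would verify that $\tilde{s}$ is genuinely a $0$-$1$ sequence, so that the formula for $\varphi$ applies. Since $n \le l$, the increments satisfy $in/l - (i-1)n/l = n/l \le 1$, whence each ceiling difference $\tilde{s}_i = \lceil in/l\rceil - \lceil(i-1)n/l\rceil$ is either $0$ or $1$. The same telescoping shows that the total number of ones is $\lceil n\rceil - \lceil 0\rceil = n$, confirming $\tilde{s} \in S_{l,n}$.

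The key step is evaluating the exponent of $3$ appearing in the $j$th term. That exponent is $\tilde{s}_{j+1} + \cdots + \tilde{s}_l = \sum_{i=j+1}^{l} (\lceil in/l\rceil - \lceil(i-1)n/l\rceil)$, which telescopes to $\lceil ln/l\rceil - \lceil jn/l\rceil = n - \lceil jn/l\rceil$. Substituting this exponent together with the definition of $\tilde{s}_j$ into the closed form for $\varphi$ gives $\varphi(\tilde{s}) = \sum_{j=1}^{l}(\lceil jn/l\rceil - \lceil(j-1)n/l\rceil) 2^{j-1} 3^{n-\lceil jn/l\rceil}$, which equals $M_{l,n}$ by the lemma and is exactly the claimed identity.

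I expect no genuine obstacle here; the corollary is essentially bookkeeping built on top of the lemma. The only point that deserves a moment of care is recognizing that the upper telescoping endpoint simplifies as $\lceil ln/l\rceil = \lceil n\rceil = n$ precisely because $n \in \mathbb{N}$; were $n$ non-integral this collapse would fail. Everything else is a mechanical substitution into the already-established formula for $\varphi$, combined with the identification $\varphi(\tilde{s}) = M_{l,n}$ furnished by the preceding lemma.
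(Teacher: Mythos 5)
Your proposal is correct and follows exactly the intended route: the paper states this corollary (following Halbeisen--Hungerb\"uhler) as an immediate consequence of the preceding lemma, obtained by substituting $\tilde{s}_i=\lceil in/l\rceil-\lceil(i-1)n/l\rceil$ into the explicit formula $\varphi(s)=\sum_{j=1}^{l(s)}s_j3^{s_{j+1}+\cdots+s_{l(s)}}2^{j-1}$ and telescoping the exponent of $3$ to $n-\lceil jn/l\rceil$, just as you do. Your additional checks (that $\tilde{s}$ is a $0$-$1$ sequence with exactly $n$ ones, and that $\lceil ln/l\rceil=n$ requires $n$ integral) are sound and fill in details the paper leaves implicit.
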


\section {\texorpdfstring{The Minimum Element in a $\boldsymbol{3n+c}$ Cycle}{The Minimum Element in a 3n+c Cycle}}
Setting $c$ to $2^{l(s)}-3^{n(s)}$ in Lemma~\ref{lemma:2} gives integer $3n+c$ cycles. A staircase for $\tilde{s}$ where $27$ and $n=17$ along with a staircase representing the partial sums of $\lfloor in/l \rfloor - \lfloor (i-1)n/l \rfloor$ is given in Figure~\ref{fig:1}.

\begin{figure}[H]
	\includegraphics[clip, trim=0cm 3.8cm 3cm 0cm, width=\linewidth]{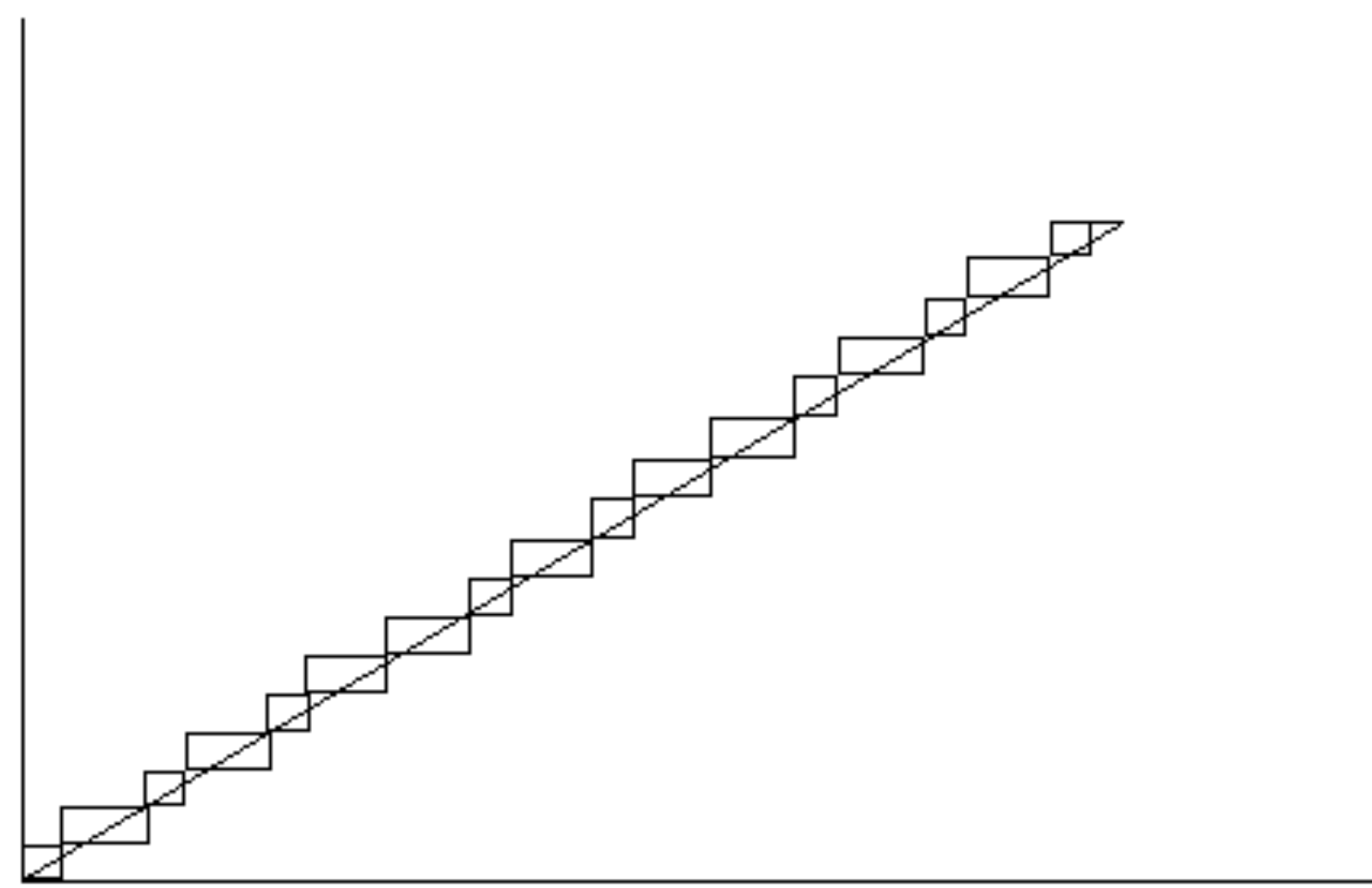}
	\caption{staircase representing the partial sums of $\lfloor in/l\rfloor-\lfloor(i-1)n/l\rfloor$}
	\label{fig:1}
\end{figure} 
 
\noindent The staircase using the floor function can be viewed as being an upside-down staircase where Halbeisen and Hungerbühler's logic can be used to find a lower bound of the maximum odd element in a $3n+c$ cycle. Let $t_{j}=\lceil jn/l\rceil - \lceil (j-1)n/l \rceil$, $j=1,\ldots,l$. This parity vector is an element of $S_{l,n}$. Let $r$ denote $\gcd(l,n)$. The parity vector $\lfloor jn/l \rfloor - \lfloor(j-1)n/l \rfloor$, $j=1,\ldots,l$, consists of $r$ identical sub-vectors. Similarly, the parity vector $t_{j}$ consists of $r$ identical sub-vectors and each of these sub-vectors is the same as the corresponding sub-vector of $\lfloor jn/l \rfloor - \lfloor (j-1)n/l \rfloor$, $j=1,\ldots,l$, except for the first and last elements. First suppose that $l$ and $n$ are relatively prime. When the parity vector $\lfloor jn/l\rfloor - \lfloor(j-1)n/l\rfloor$, $j=1,\ldots,l$ is right-rotated by one position (corresponding to a multiplication by 2), it matches $t_{j}$ except for the first two elements of each sub-vector. The first mismatch corresponds to a loss of $3^{n-1}$ and the second mismatch corresponds to a gain of $2\cdot 3^{n-1}$. In general, the loss is $\sum_{i=0}^{r-1}2^{i(l/r)}3^{n-1-i(n/r)}$. Let $N_{l,n}$ denote $2M_{l,n}-\sum_{i=0}^{r-1}2^{i(l/r)}3^{n-1-i(n/r)}$. A primitive $3n+c$ cycle doesn't have any common divisors of its elements. A generalization of Halbeisen and Hungerbühler's result is given by Corollary~\ref{co:generalization_hh}:

\begin{corollary}
If $c=2^{l}-3^{n}$, $M_{l,n}$ is greater than or equal to the minimum elements in the $3n+c$ cycles corresponding to $s\in S_{l,n}$ (not necessarily primitive) and $N_{l,n}$ is less than or equal to the maximum odd elements in the cycles.
\label{co:generalization_hh}
\end{corollary}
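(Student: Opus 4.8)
The plan is to reduce the whole statement to the single identity, valid because $c=2^{l}-3^{n}$, that the elements of the $3n+c$ cycle attached to a parity vector $s\in S_{l,n}$ are exactly the integers $\varphi(t)$ as $t$ ranges over the orbit $\sigma(s)$. Substituting $c=2^{l}-3^{n}$ into the fixed point of Lemma~\ref{lemma:2} cancels the denominator, so the cycle element generated by $s$ is $\varphi(s)$ itself, and using the concatenation formula for $\varphi$ one checks the transition rule $\varphi(\lambda_{l}(s))=\varphi(s)/2$ when $s_{1}=0$ and $\varphi(\lambda_{l}(s))=(3\varphi(s)+c)/2$ when $s_{1}=1$. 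Assuming $2^{l}>3^{n}$ (the positive cycles of Lemma~\ref{Lemma:1}), halving strictly decreases a positive even element, so the smallest element of the cycle is odd and equals $\min_{t\in\sigma(s)}\varphi(t)$, while the largest odd element equals $\max\{\varphi(t):t\in\sigma(s),\ t_{1}=1\}$.

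With this dictionary the first assertion is immediate. By definition $M_{l,n}=\max_{s}\min_{t\in\sigma(s)}\varphi(t)$, so $\min_{t\in\sigma(s)}\varphi(t)\le M_{l,n}$ for every $s\in S_{l,n}$; that is, the minimum element of each cycle is at most $M_{l,n}$, and the preceding Lemma and Corollary identify the maximizing $s$ as the ceiling vector $\tilde{s}$ and supply the closed form. Primitivity plays no role here, since the argument only reads $\varphi$ off the orbit.

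For the second assertion I would first realize $N_{l,n}$ as a genuine cycle value and then show it is extremal. Let $f_{j}=\lfloor jn/l\rfloor-\lfloor(j-1)n/l\rfloor$ be the floor vector and let $g$ be its one-step right rotation. The computation routes through the ceiling vector: since $\tilde{s}$ ends in $0$, right-rotating $\tilde{s}$ yields a vector with leading bit $0$, whence by the halving rule above $\varphi(\operatorname{reverse}(\O))$ is not needed and in fact $\varphi(\text{right-rotate}(\tilde{s}))=2M_{l,n}$. Comparing $g$ with $\text{right-rotate}(\tilde{s})$, the two strings agree except at the first two coordinates of each of the $r=\gcd(l,n)$ identical sub-blocks, and evaluating $\varphi$ at these mismatches contributes from the $i$-th block a net change of $-2^{\,i(l/r)}3^{\,n-1-i(n/r)}$ (a gain at the first coordinate outweighed by a loss at the second), so that
\[
\varphi(g)=2M_{l,n}-\sum_{i=0}^{r-1}2^{\,i(l/r)}3^{\,n-1-i(n/r)}=N_{l,n}.
\]
Because $g$ begins with a $1$, $\varphi(g)$ is an odd element of the floor cycle, and one checks it is in fact its largest odd element (the predecessor of the peak $\varphi(f)$).

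It then remains to prove that the floor cycle has the smallest largest-odd-element among all type-$(l,n)$ cycles, i.e. $N_{l,n}\le\max\{\varphi(t):t\in\sigma(s),\ t_{1}=1\}$ for every $s$. This is the dual of Halbeisen and Hungerbühler's extremal result, and it is where the real work lies: I would rerun their optimization on the upside-down (floor) staircase, exploiting the reversal symmetry $\tilde{s}=\operatorname{reverse}(f)$ between the two Christoffel words to turn their statement ``the balanced arrangement maximizes the minimum element'' into ``the balanced arrangement minimizes the maximum odd element.'' The main obstacle is making this duality rigorous---converting the min-over-orbit optimality of the ceiling word into max-odd-over-orbit optimality of the floor word under reversal---together with the block-by-block bookkeeping of the mismatch sum in the non-coprime case $r>1$, where the cycle need not be primitive and each of the $r$ sub-blocks contributes exactly one term to the loss.
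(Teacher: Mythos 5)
Your proposal follows essentially the same route the paper sketches, and its verifiable parts are correct: with $c=2^{l}-3^{n}$, Lemma~\ref{lemma:2} identifies the cycle attached to $s$ with the values $\varphi(t)$, $t\in\sigma(s)$, so the bound by $M_{l,n}$ is immediate from $M_{l,n}=\max_{s}\min_{t\in\sigma(s)}\varphi(t)$; the transition rules $\varphi(\lambda_{l}(t))=\varphi(t)/2$ for $t_{1}=0$ and $\varphi(\lambda_{l}(t))=(3\varphi(t)+c)/2$ for $t_{1}=1$ do follow from the decomposition formula; $\varphi$ of the right-rotated ceiling word is $2M_{l,n}$ because $\tilde{s}$ ends in $0$; and the block-by-block mismatch count gives $\varphi(g)=2M_{l,n}-\sum_{i=0}^{r-1}2^{i(l/r)}3^{n-1-i(n/r)}=N_{l,n}$ for the right-rotated floor word $g$. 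Since $g$ begins with $1$, this proves that $N_{l,n}$ is an odd element of the floor cycle --- a point the paper itself leaves open (it concedes that ``it is not apparent that it is in a cycle, but it appears to be in the same cycle as $M_{l,n}$''), so here you are actually ahead of the source. (Your restriction to $2^{l}>3^{n}$ is unnecessary: $\varphi(t)>0$ for every nonzero $t$, so these cycles consist of positive integers even when $c<0$, which is the case in most of the paper's examples.)

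However, the proof is incomplete exactly where you flag it, and the gap is genuine. The corollary asserts $N_{l,n}\le\max\{\varphi(t):t\in\sigma(s),\ t_{1}=1\}$ for \emph{every} $s\in S_{l,n}$; membership of $N_{l,n}$ in the floor cycle yields this bound for that one orbit only. Your plan to obtain the general bound by ``rerunning'' Halbeisen and Hungerb\"{u}hler's optimization under reversal is not a formal consequence of their max--min lemma: writing $a_{k}$ for the number of ones after position $k$, one has $\varphi(s)=\sum_{k:s_{k}=1}3^{a_{k}}2^{k-1}$ while $\varphi(\mathrm{reverse}(s))=\sum_{k:s_{k}=1}3^{n-1-a_{k}}2^{l-k}$, so reversal inverts each term through $x\mapsto 3^{n-1}2^{l-1}/x$ rather than reversing the order of $\varphi$ on orbits; min--max optimality of the floor word therefore does not transfer from max--min optimality of the ceiling word without redoing the (here omitted) chain of balanced-word lemmas for the new functional. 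The same remark applies to your side claim that $\varphi(g)$ is the largest odd element of its own cycle. You should know, though, that the paper never closes this gap either: its entire justification of the second inequality is the one-sentence ``upside-down staircase'' heuristic plus numerical examples, so there is no proof in the paper for your argument to diverge from --- both treatments rest on the same unproven dual-extremality claim, and yours at least states it precisely.
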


\noindent The elements of the $3n+c$ cycles are $\varphi(t)_{t \in\sigma(s)}$ where $s\in S_{l,n}$. From the definition of $N_{l,n}$, it is not apparent that it is in a cycle, but it appears to be in the same cycle as $M_{l,n}$. \\

\noindent For example, for $(l,n)=(6,4)$, the parity vector for $M_{l,n}$ is $(1, 1, 0, 1, 1, 0)$, $c=-17$, and the odd elements of the cycle containing $M_{l,n}$ and $N_{l,n}$ are $(85, 119,85,119)$. There is only one more element in $S_{l,n}$ and its odd elements are $(65,89,125,179)$. The smallest odd element in the cycle containing $M_{l,n}$ (85) is greater than 65 and the largest odd elements in the cycle containing $N_{l,n}$ (119) is less than 125. The cycle with odd elements of $(85,119)$ is not primitive and reduces to a cycle with odd elements of $(5,7)$ for $c=-1$. When $l$ and $n$ are not relatively prime and $c=2^{l}-3^{n}$, the cycles generated from $M_{l,n}$ are not primitive. This is due to the duplicated sub-vectors in the parity vector forming a geometric progression. This geometric progression is the same as in the expansion of $(a^{x}-b^{x})/(a-b)$. Reducing the cycle generated from $M_{6,4}$ effectively divides $2^{6}-3^{4}$ by $(2^{6}-3^{4})/(2^{3}-3^{2})$. \\

\noindent When $l=11$ and $n=7$, $M_{l,n}=3767$, $N_{l,m}=6805$, and $2^{l}-3^{n}=-139$. The quotient $3767/139$ (approximately equal to 27) is greater than the minimum element in the $c=-1$ cycle $(34,17,25,37,55,82,41,61,91,136,68)$ and $6805/139$ (approximately equal to $49$) is less than the maximum odd element. For the $c=-1$ cycle of $(5,7,10)$, $M_{3,2}=5$ and $N_{3,2}=7$ ($-1$ equals $2^{3}-3^{2}$). For $c=-17$, the cycles are $(85,119,170,85,119,170)$, $(103,146,73,101,143,206)$, and $(65,89,125,179,260,130)$ ($2^{6}-3^{4}=-17$). The first cycle contains $M_{6,4}$ (equal to $85$) and $N_{6,4}$ (equal to $119$). As expected, $85$ is greater than $73$ and $65$ and $119$ is less than $143$ and $179$. The cycle $(85,119,170)$ is not primitive and reduces to the $c=-1$ cycle. For the $c=1$ cycle of $(4,1)$, $2^{2}-3^{1}=1$ and $M_{2,1}=N_{2,1}=1$. There can be no other such $c=1$ cycles due to the Catalan conjecture (proved by Mih\v{a}ilescu~\cite{m}). This theorem states that the only natural number solutions of $x^{a}-y^{b}$ are $x=3$, $a=2$, $y=2$, and $b=3$. This leaves the possibility of $3n+c$ cycles where $s\in S_{l,n}$ that are not primitive and reduce to $c=1$ cycles. \\

\noindent All the parity vectors in $S$ are used up by the $3n+c$ cycles where $c=2^{l}-3^{n}$. Two $3n+c$ cycles with different $c$ values can't have the same parity vector. For example, the elements of a $c=5$ cycle are $(19,31,49,76,38)$ and a $c=7$ sequence having the same parity vector is $(65,101,155,236,118,\ldots)$. The ratios of the odd elements are 0.2923, 0.3069, and 0.3161 and would have to keep increasing to match the iterations of the $3n+5$ cycle. So the unreduced $3n+c$ cycles where $c=2^{l}-3^{n}$ account for all possible primitive $3n+c$ cycles.

\section{Statistical Results}

Let $d$ denote the rotation of the floor parity vector required to match the ceiling parity vector (measured in the clockwise direction). This quantity appears to satisfy the congruence $n\cdot d\equiv -1\bmod l$ when $\gcd(l,n)=1$. A similar congruence is $d(n-x)-(l-d)x\equiv -1\bmod l$. This congruence was derived using the staircases and can be solved given the $d$ value so that the values of $(n-x,x)$ are not specific to properties of $3n+c$ cycles. \\

\noindent For a real number $x$, let $[x]$ denote the integral part of $x$ and $\{x\}$ the fractional part. The sequence $\omega=\{x_{n}\}$, $n=1,2,3,\ldots$ of real numbers is said to be uniformly distributed modulo $1$ (abbreviated u.d. $\bmod 1$) if for every pair $a, b$ of real numbers with $0\le a<b\le 1$ we have $\lim_{N\rightarrow \infty}\frac{A([a,b);N;\omega)}{N}=b-a$. The formal definition of u.d. mod 1 was given by Weyl~\cite{we}~\cite{we1}. Let $\Delta: 0=z_{0}<z_{1}<z_{2}<\ldots$ be a subdivision of the interval $[0, \infty)$ with $\lim_{k\rightarrow\infty}z_{k}=\infty$. For $z_{k-1}\le x<z_{k}$ put $[x]_{\Delta}=z_{k-1}$ and $\{x\}_{\Delta}=\frac{x-z_{k-1}}{z_{k}-z_{k-1}}$ so that $0\le\{x\}_{\Delta}<1$. The sequence of $(x_{n})$, $n=1,2,3,\ldots$ of non-negative real numbers is said to be uniformly distributed modulo $\Delta$ (abbreviated u.d. mod $\Delta$) if the sequence $(\{x_{n}\}_\Delta)$, $n=1,2,3,\ldots$ is u.d. $\bmod 1$. The notion of u.d. $\bmod\Delta$ was introduced by Leveque~\cite{lev}. \\

\noindent If $f$ is a function having a Riemann integral in the interval $[a,b]$, then its integral is the limit of Riemann sums taken by sampling the function $f$ in a set of points chosen from a fine partition of the interval. This is then a criterion for determining if a sequence is uniformly distributed. A sequence of real numbers is uniformly distributed (mod 1) if and only if for every Riemann-integrable function $f$ on $[0,1]$ one has $\lim_{N\rightarrow\infty}1/N\sum_{n\le N}f(\{x_{n}\})=\int_{0}^{1}f(x)dx$. In the following, evidence that $n-x$ and $x$ are u. d. mod $\Delta$ is presented using this criterion and Weyl's criterion~\cite{we}~\cite{we1}. \\

\noindent Weyl's criterion is that $(\gamma_{n})$ is u.d. mod 1 if and only if $\lim_{N\rightarrow\infty}1/N\sum_{n=1}^{N}e^{2\pi im\gamma_{n}}=0$ for every integer $m\ne 0$. In the following, the $z$ increments in the subdivision are set to $\sqrt{2}$ to avoid any aliasing with the integer $n-x$ and $x$ values. A plot of the resulting sequence generated from the sorted $n-x$ values for $n=1,\ldots,l-1$ and $l=997$ is given in Figure~\ref{fig:2}.

\begin{figure}[H]
	\includegraphics[clip, trim=0cm 1cm 0cm 0cm, width=\linewidth]{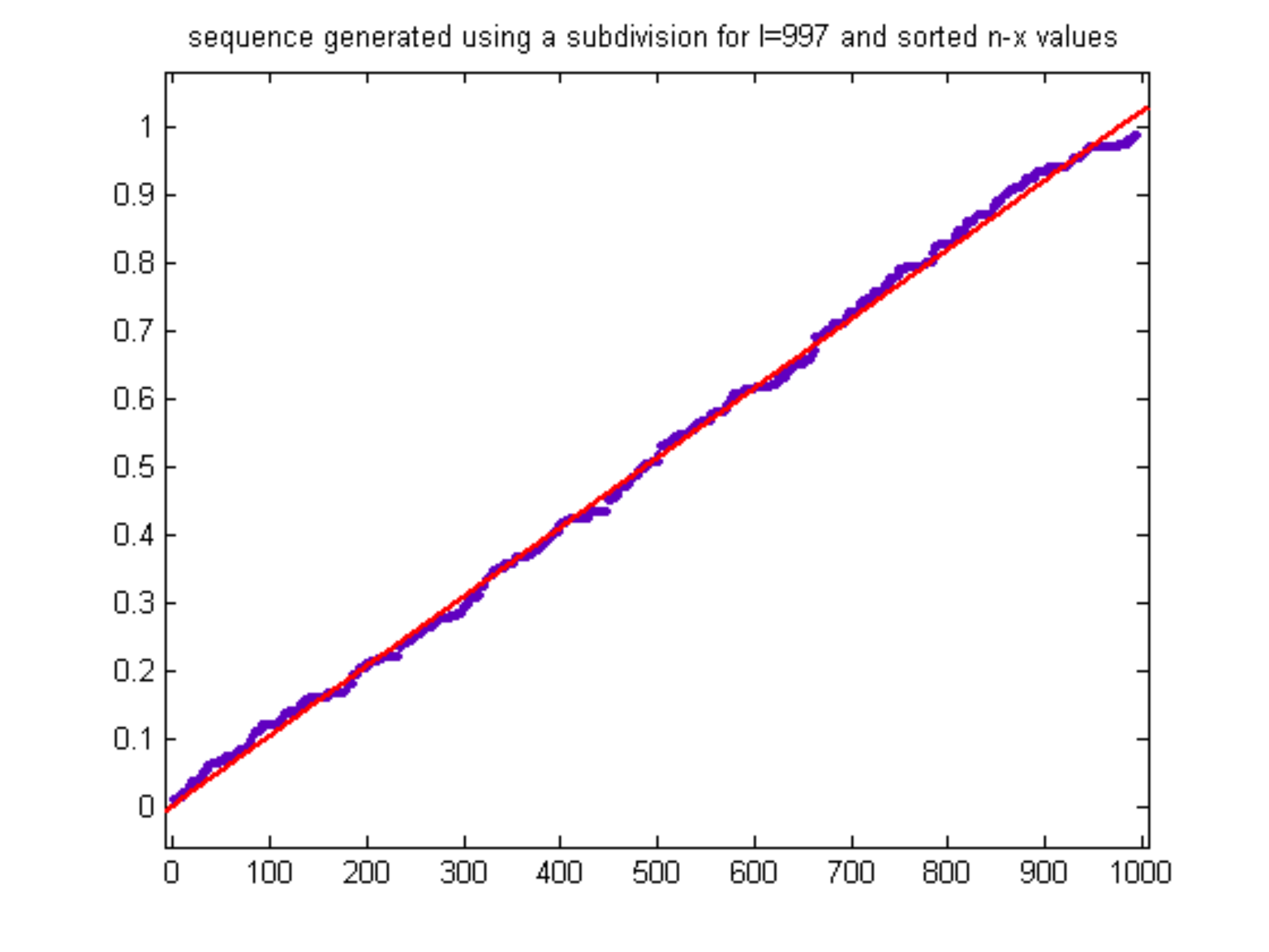}
	\caption{sequence generated from the sorted $n-x$ values for $n=1,\ldots,l-1$ and $l=997$}
	\label{fig:2}
\end{figure}

\noindent In the following, $(\gamma_{n})$ is set to such sequences and the moduli of the complex-valued results are computed. The moduli for $l=1999$, $n-x$, and $m=1$ are given in Figure~\ref{fig:3}.

\begin{figure}[H]
	\includegraphics[clip, trim=0cm 1cm 0cm 0cm, width=\linewidth]{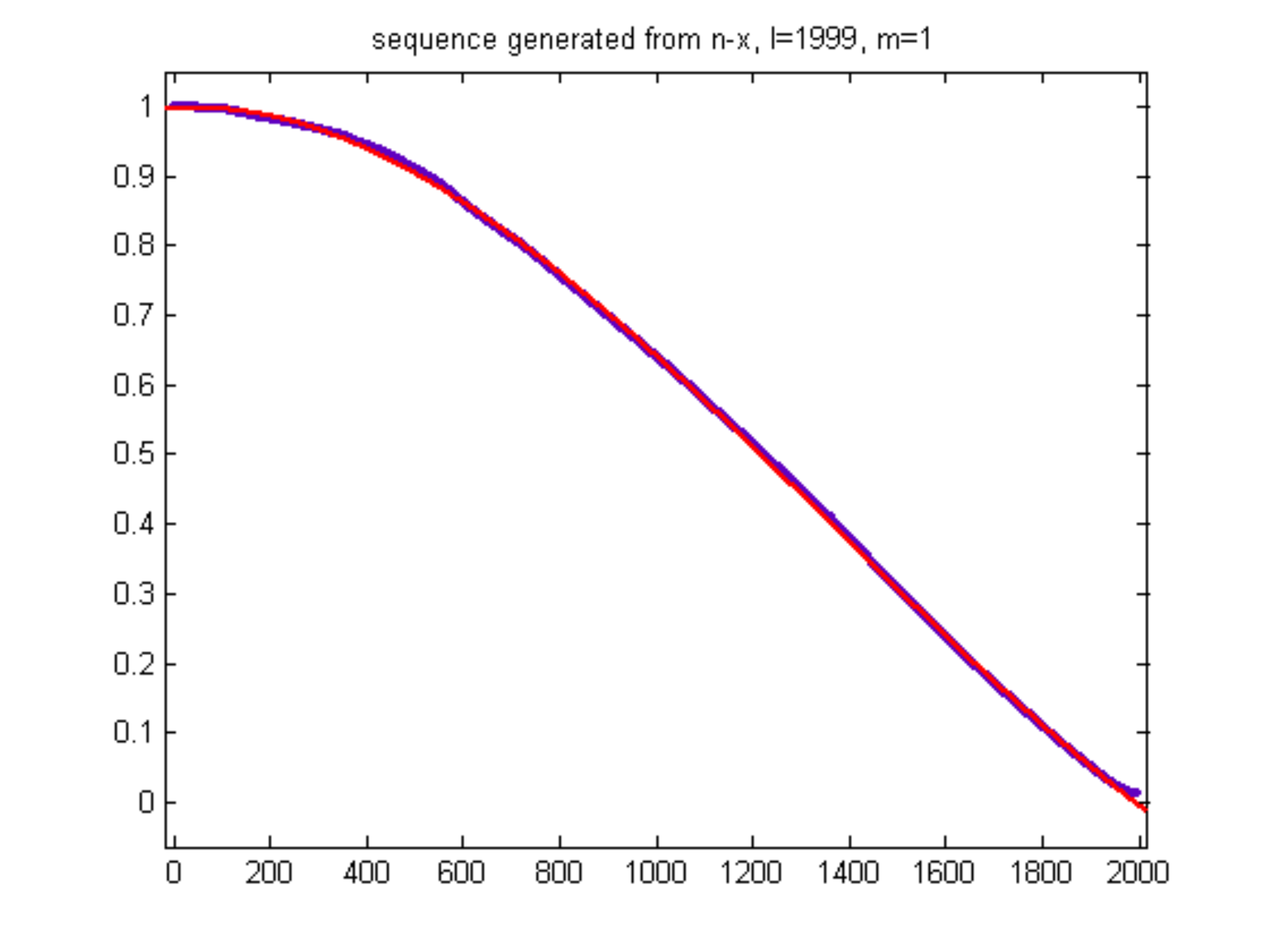}
	\caption{Moduli for $l=1999$, $n-x$, and $m=1$}
	\label{fig:3}
\end{figure}

\noindent A cubic least-squares fit of the curve (where R-squared=0.9999) is included. The moduli for $l=1999$, $x$, and $m=1$ (excluding 16 values of zero in the input sequence) are given in Figure~\ref{fig:4}.

\begin{figure}[H]
	\includegraphics[clip, trim=0cm 1cm 0cm 0cm, width=\linewidth]{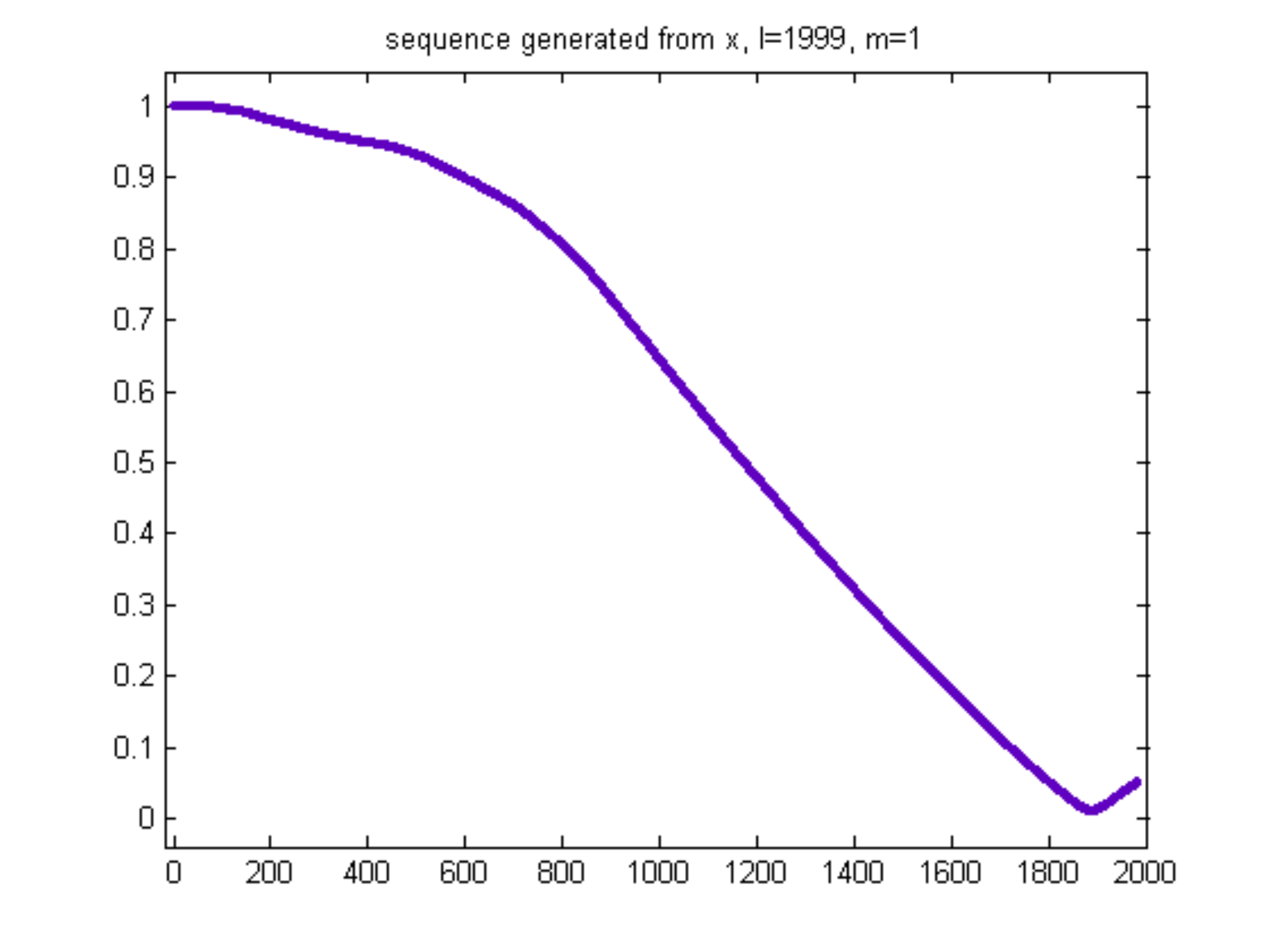}
	\caption{Moduli for $l=1999$, $x$, and $m=1$}
	\label{fig:4}
\end{figure}

\noindent The moduli for $l=9973$, $n-x$, and $m=10$ are given in Figure~\ref{fig:5}.

\begin{figure}[H]
	\includegraphics[clip, trim=0cm 1cm 0cm 0cm, width=\linewidth]{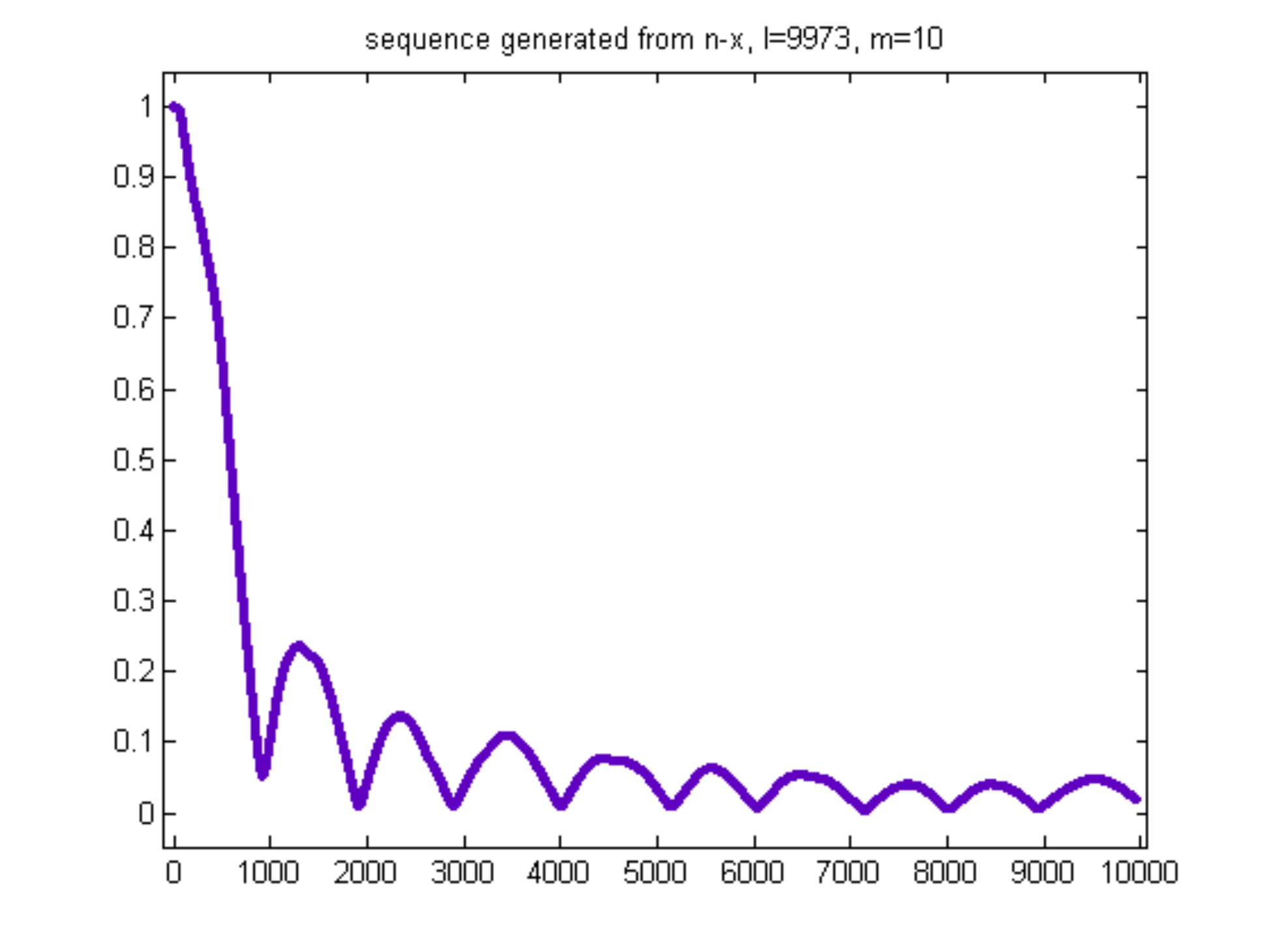}
	\caption{Moduli for $l=1999$, $x$, and $m=1$}
	\label{fig:5}
\end{figure}

\noindent The moduli for $l=9973$, $x$, and $m=10$ (excluding 18 values of zero in the input sequence) are given in Figure~\ref{fig:6}.

\begin{figure}[H]
	\includegraphics[clip, trim=0cm 1cm 0cm 0cm, width=\linewidth]{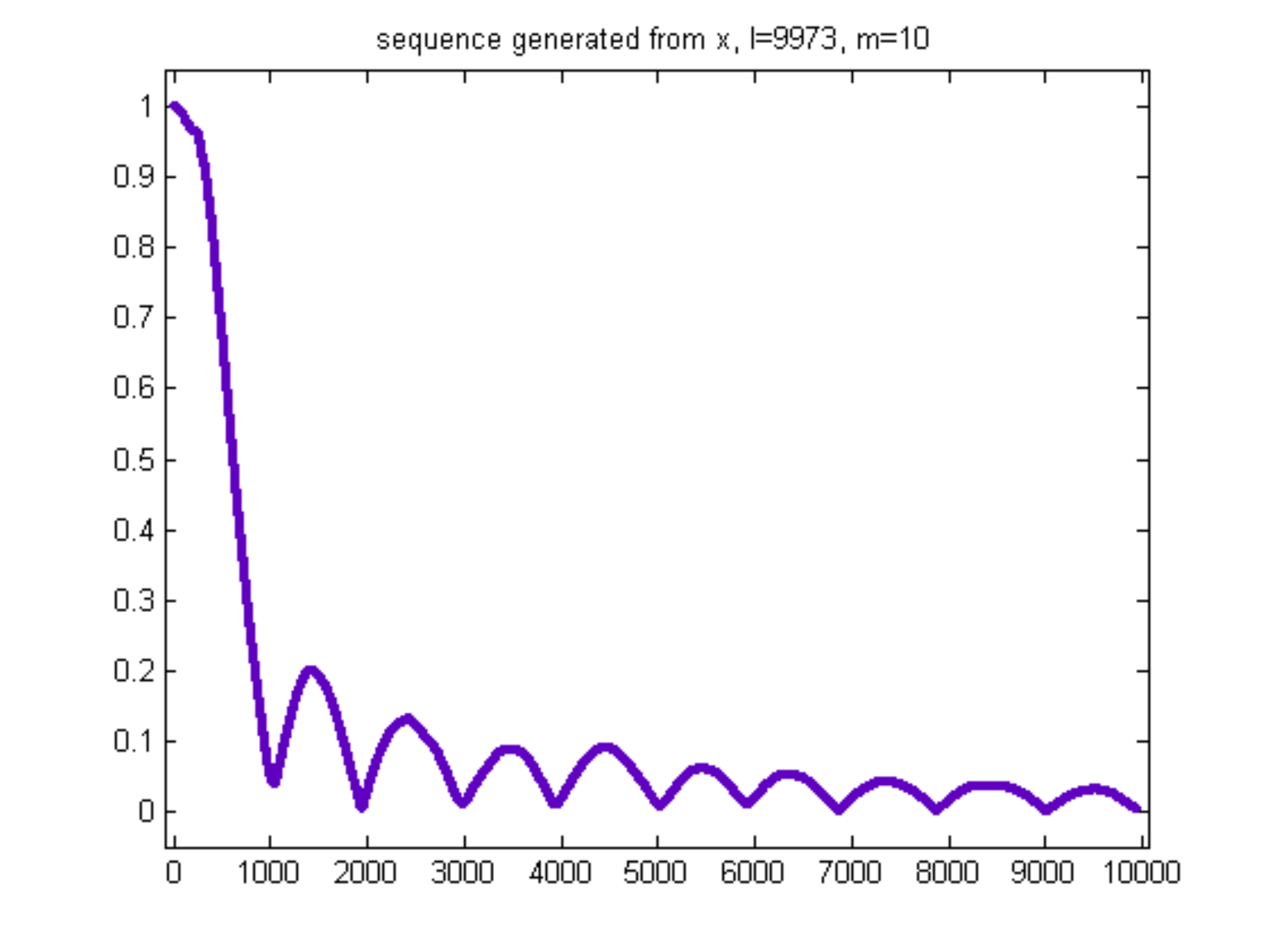}
	\caption{Moduli for $l=9973$, $x$, and $m=10$}
	\label{fig:6}
\end{figure}

\noindent In general, there are $m-1$ oscillations in such curves. \\

\noindent The functions $f(x)$ to be considered are $x$, $x^{2}$, $x^{3}$, $x^{4}$, $\sqrt{x}$, $\sqrt{\sqrt{x}}$, $\log(x)$, $e^{x}$, $\sin(x)$, $\cos(x)$, $\tan(x)$, and $\frac{1}{a^{2}+x^{2}}$. The values of $\int_{0}^{1}f(x)dx$ are $1/2$, $1/3$, $1/4$, $1/5$, $2/3$, $4/5$, $-1$, $2.72$, $0.84$, $0.54$, $1.56$, and $\frac{1}{a}\tan^{-1}{\frac{x}{a}}$ (equal to $0.23$ for $a=2$ and $0.11$ for $a=3$) respectively. For $l=997$ and the sequence generated from $x$, the results are $0.49$, $0.31$, $0.22$, $0.17$, $0.66$, $0.80$, $-1.00$, $2.69$, $0.84$, $0.56$, $1.45$, $0.23$ (for $a=2$), and $0.11$ (for $a=3$) respectively. For $l=9973$ and the sequence generated from $n-x$, the results are $0.50$, $0.34$, $0.25$, $0.20$, $0.67$, $0.80$, $-0.97$, $2.74$, $0.84$, $0.54$, $1.55$, $0.23$ (for $a=2$), and $0.11$ (for $a=3$) respectively. \\

\noindent The trigonometric functions require a fixed amount to be added to the sequence values (apparently to change the phase). The exponential function also requires a fixed amount to be added to the sequence values - the same as for the cosine function. Apparently, this is due to Euler's formula $e^{ix}=\cos(x)+i\cdot \sin(x)$. Denote the amounts for sine and cosine by $j$ and $k$ respectively. These values satisfy the equation $j^{2}+k^{2}=\cos(1)$, similar to the formula $\sin(x)^{2}+\cos(x)^{2}=1$. They also satisfy the equation $j/k=\sqrt{\tan(1)}$, similar to the formula $\sin(x)/\cos(x)=\tan(x)$. The amount required for the sine function is $\sin^{-1}(\cos(1))$. The amount required for the cosine function can be determined by using the formula $j^{2}+k^{2}=\cos(1)$. The amount required for the tangent function is $1/e$ ($e=\tan(1)/j$). \\

\noindent See Cox and Ghosh~\cite{cg} for more graphs.

\section{Discrete Uniform Distributions and the M\"{o}bius Function}

\noindent Cox~\cite{cox} investigated convolving the zeta function zeros with the M\"{o}bius function. This method is applicable to any uniformly distributed sequence. In the following, the $n-x$ values are ordered in increasing value. A plot of $n-x$ convolved with the M\"{o}bius function for $l=1999$ (a prime) is given in Figure~\ref{fig:7}.

\begin{figure}[H]
	\includegraphics[clip, trim=0cm 1cm 0cm 0cm, width=\linewidth]{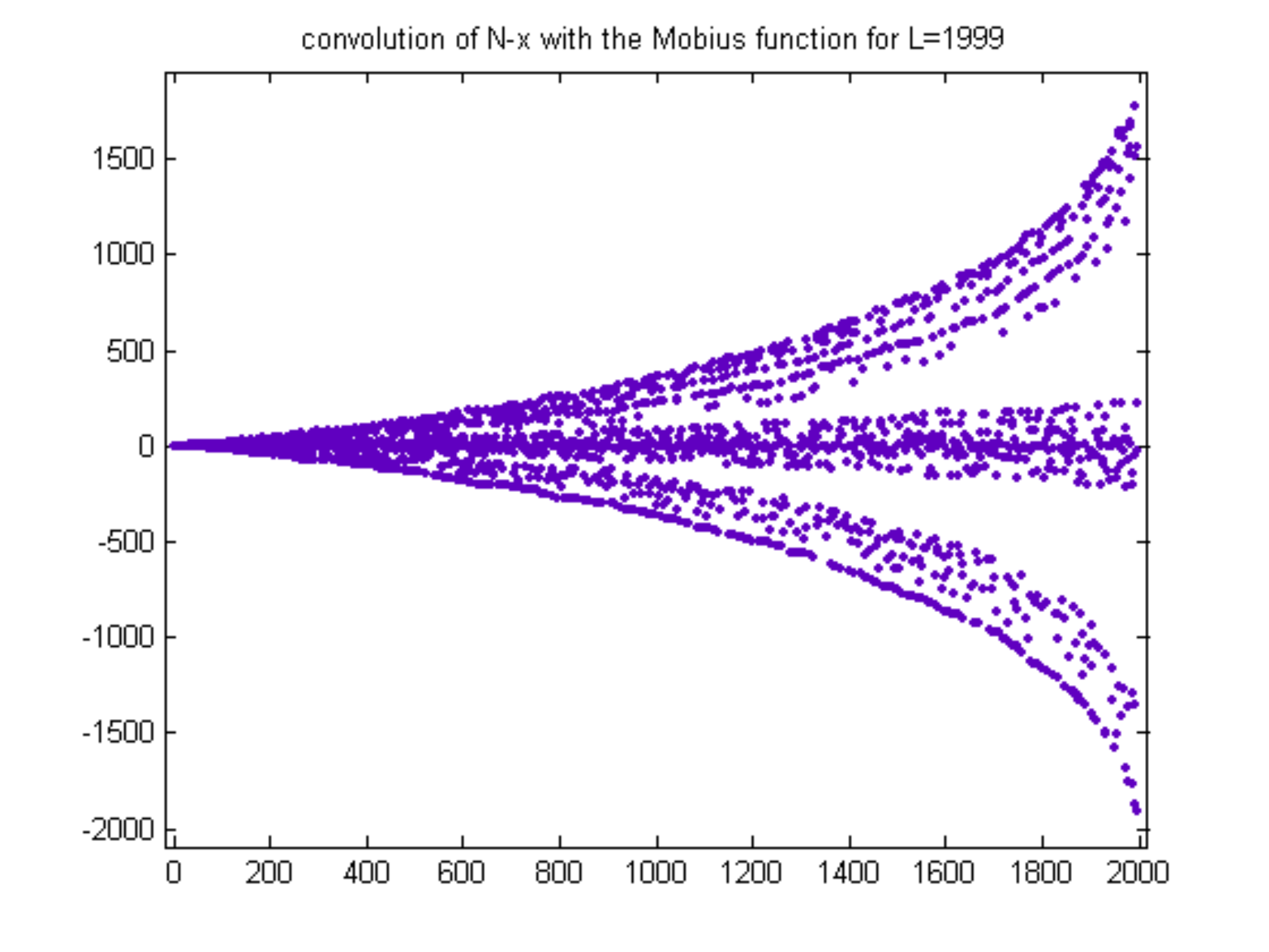}
	\caption{Convolution of $n-x$ with the M\"{o}bius function}
	\label{fig:7}
\end{figure}

\noindent The convolution consists of many curves. The bottom curve corresponds to the 302 primes less than 1999. Let $p$ and $q$ denote distinct primes. A plot of the curves at $pq$ locations is given in Figure~\ref{fig:8}.

\begin{figure}[H]
	\includegraphics[clip, trim=0cm 1cm 0cm 0cm, width=\linewidth]{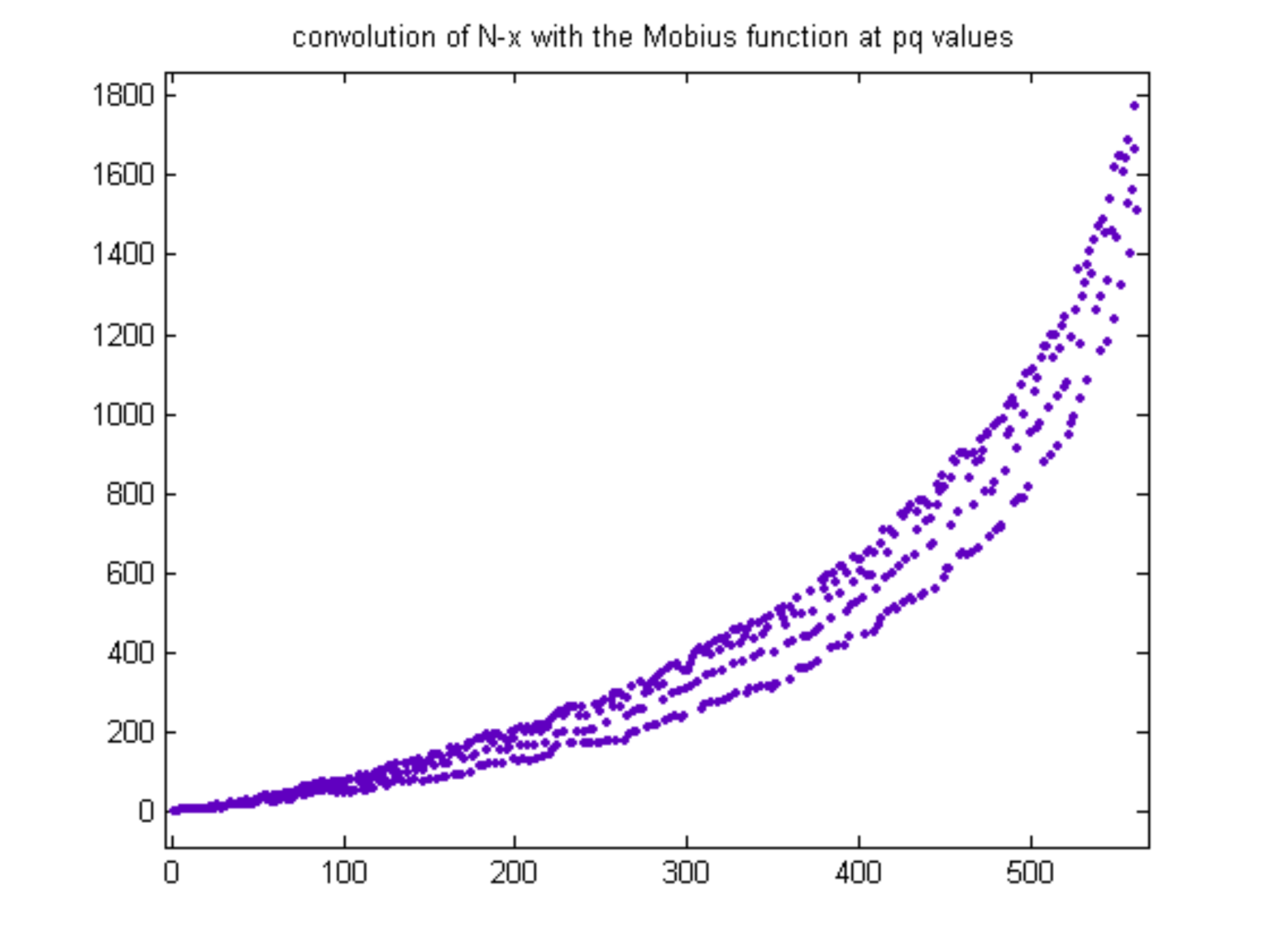}
	\caption{Curves at $pq$ locations}
	\label{fig:8}
\end{figure} 

\noindent The convolution of the differences between the adjacent $n-x$ values with the M\"{o}bius function for a particular curve is normally distributed. For the above $pq$ curve, the mean is 0.5879 with a 95\% confidence interval of (0.3742, 0.8016) and the standard deviation is 2.5812 with a 95\% confidence interval of (2.4387, 2.7415). A plot of this distribution along with the corresponding probit function is given in Figure~\ref{fig:9}.

\begin{figure}[H]
	\includegraphics[clip, trim=0cm 1cm 0cm 0cm, width=\linewidth]{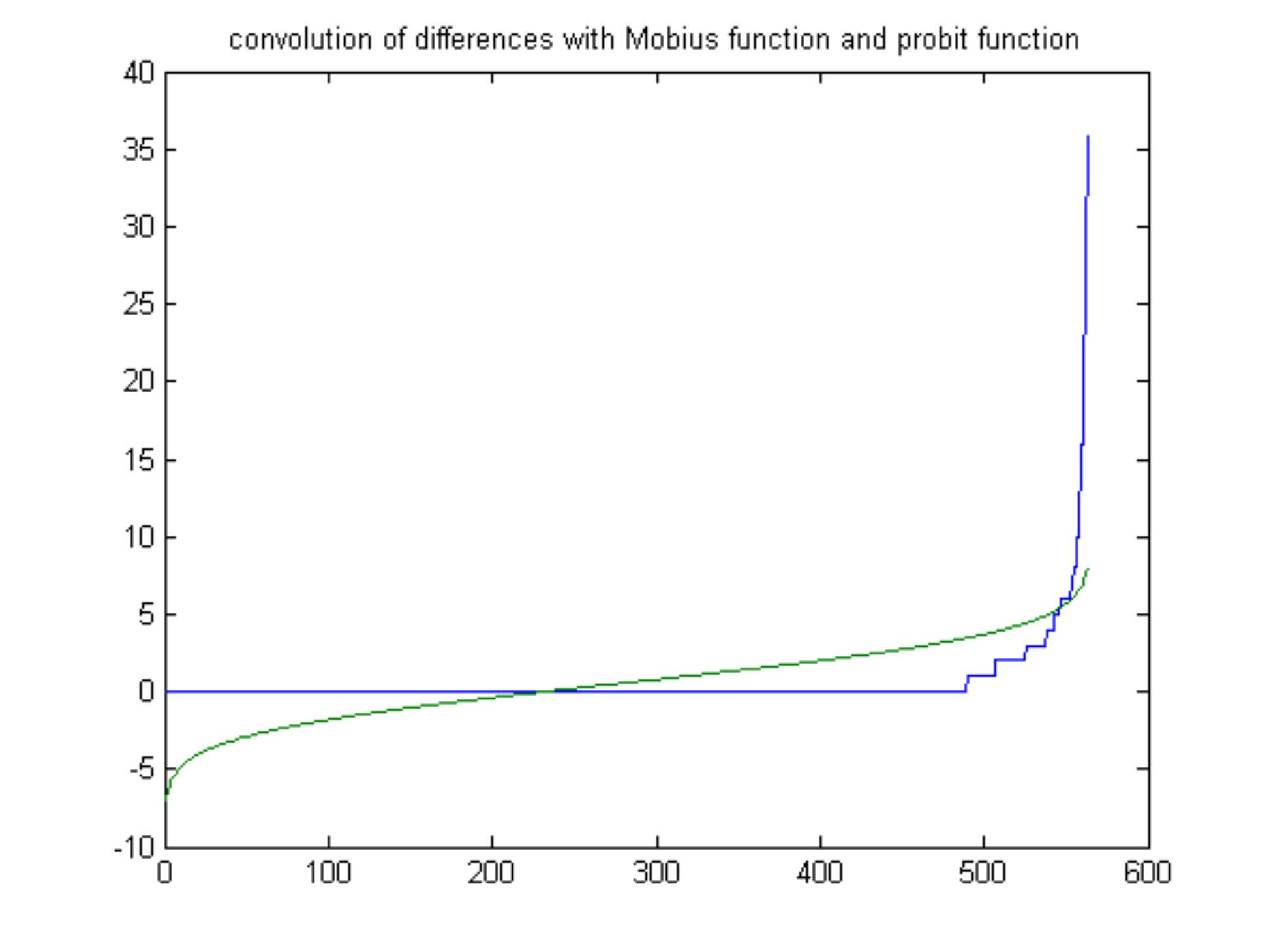}
	\caption{Convolution of differences with Möbius function}
	\label{fig:9}
\end{figure}

\noindent The probit function is the inverse cumulative distribution function of the standard normal distribution. The more general function $F^{-1}(p)=\mu+\sigma\Phi^{-1}(p)$ where $\mu$ and $\sigma$ are the mean and standard deviation of the normal distribution is used here. The poor fit is partially due to the discrete values of the distribution. \\

\noindent A plot of the zeta function zeros convolved with the M\"{o}bius function for $l=1,2,3,\ldots,2000$ is given in Figure~\ref{fig:10}.

\begin{figure}[H]
	\includegraphics[clip, trim=0cm 1cm 0cm 0cm, width=\linewidth]{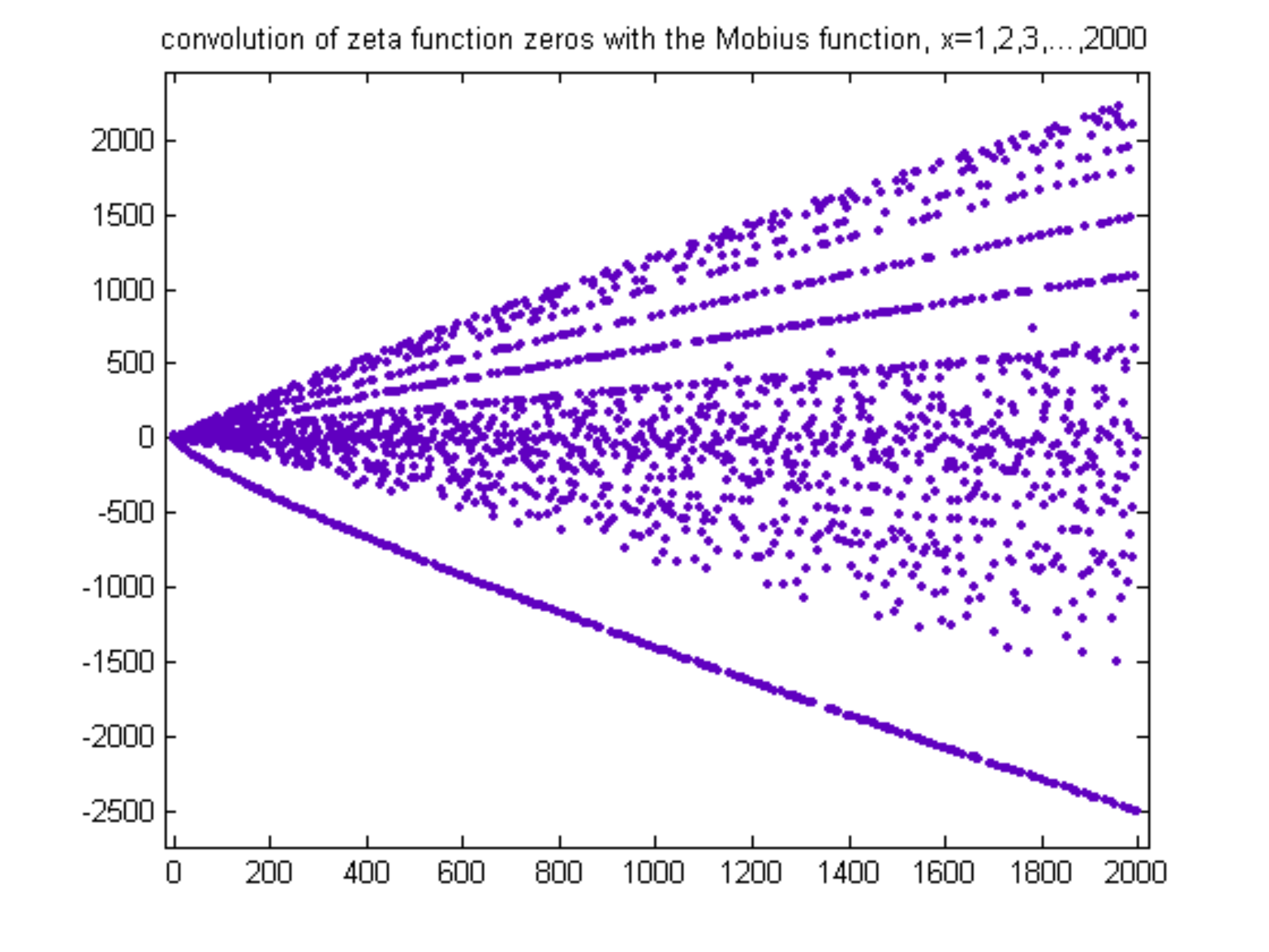}
	\caption{Convolution of zeta function zeros with M\"{o}bius function}
	\label{fig:10}
\end{figure}

\noindent The bottom curve corresponds to the 303 primes less than 2000. A plot of the curves at $pq$ locations is given in Figure~\ref{fig:11}.

\begin{figure}[H]
	\includegraphics[clip, trim=0cm 1cm 0cm 0cm, width=\linewidth]{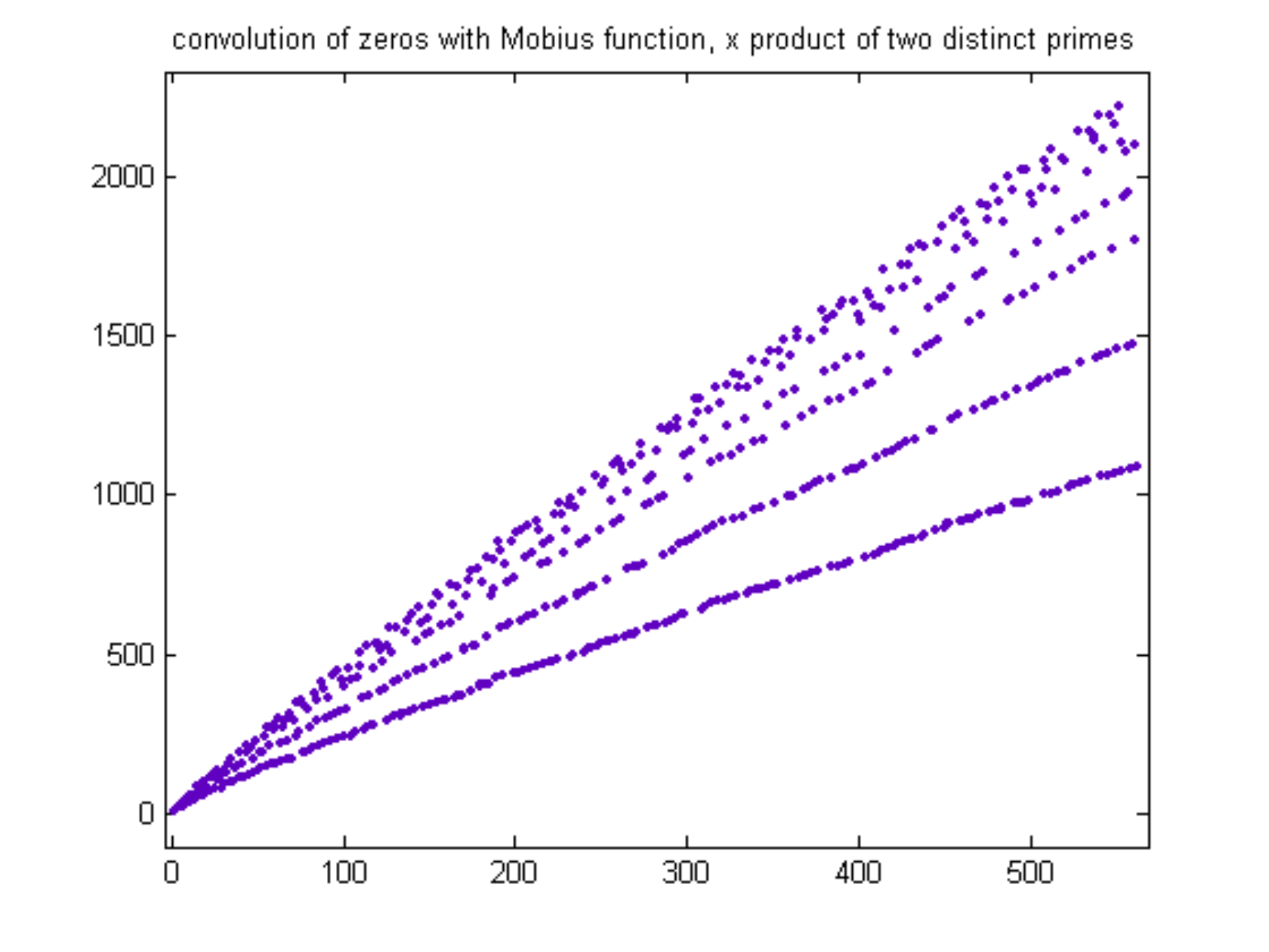}
	\caption{Curves at pq locations}
	\label{fig:11}
\end{figure}

\noindent The convolution of the differences between the adjacent zeta function zeros values with the M\"{o}bius function for a particular curve is normally distributed. For the above $pq$ curve, the mean is 2.7126 with a 95\% confidence interval of (2.6015, 2.8237) and the standard deviation is 1.3419 with a 95\% confidence interval of (1.2678, 1.4252). A plot of this distribution along with the corresponding probit function is given in Figure~\ref{fig:12}.

\begin{figure}[H]
	\includegraphics[clip, trim=0cm 1cm 0cm 0cm, width=\linewidth]{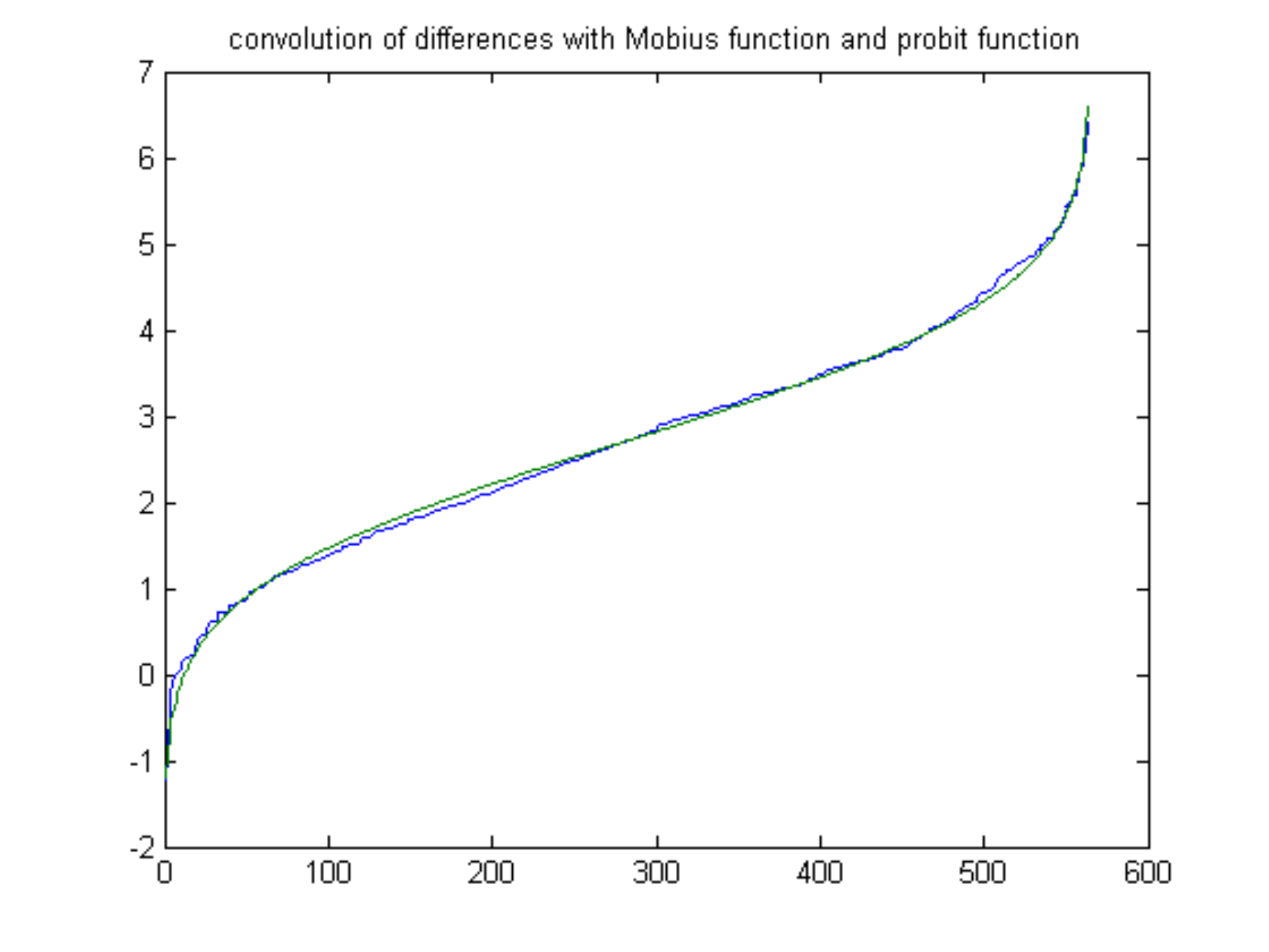}
	\caption{Convolution of differences with M\"{o}bius function}
	\label{fig:12}
\end{figure}

\noindent The results for the zeta function zeros are similar to those for the $n-x$ values.

\section{More Statistical Results}

\noindent In this section, $l$ is restricted to being prime. Since $l$ cannot divide $nd+1$ and $n'd+1$ for equal $d$, the mapping of the rotated floor parity vector to the ceiling parity vector for all possible $n$ values is one-to-one. The values of $nd+1$ modulo $l$ determines a "basis". For example, the basis for $l=31$ is \\

$
\begin{array}{rrl}
&     & n~\text{values} \\        
~\text{least residue}=&  1: & 30,15,10,6,5,3,2,1 \\
&  3: & 23,4              \\ 
&  5: & 22,14,11,7         \\
&  7: & 27,24,18,12,9,8    \\
&  8: & 19,13              \\
& 11: & 20,17              \\
& 15: & 29,16              \\
& 19: & 28,21              \\
& 21: & 26,25              \\
&     &
\end{array}
$

\noindent The number of elements in this basis is 9. The number of distinct prime factors in the respective elements is $\{3,2,3,2,2,3,2,3,3\}$. The maximum number of distinct prime factors is 3. \\

\noindent A plot of the number of elements in a basis versus the primes less than 10000 is given in Figure~\ref{fig:13}.

\begin{figure}[H]
	\includegraphics[clip, trim=0cm 1cm 0cm 0cm, width=\linewidth]{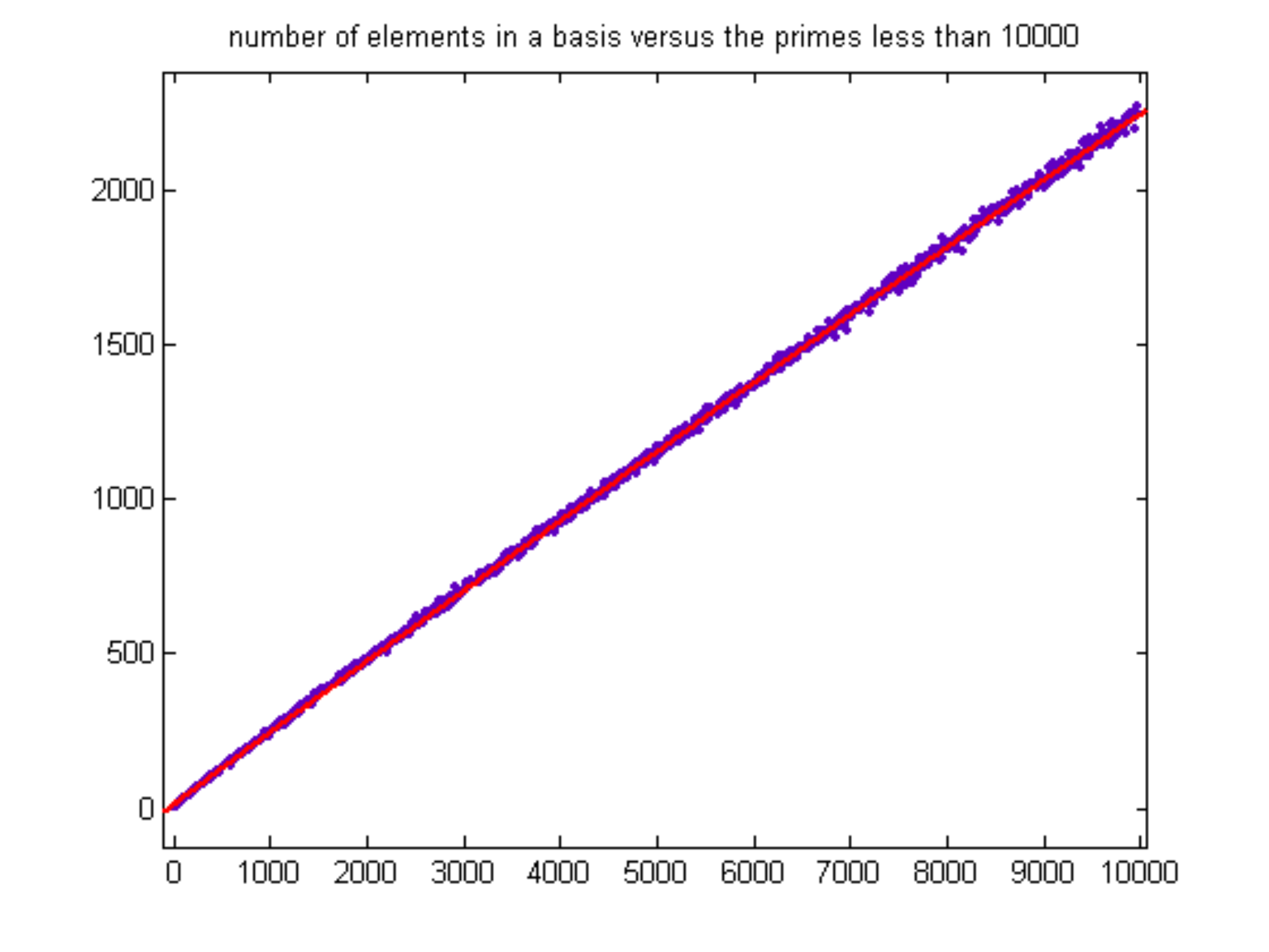}
	\caption{Number of elements in a basis versus the primes}
	\label{fig:13}
\end{figure}

\noindent For a quadratic least-squares fit of the curve, $p_{1}=-9.91\cdot 10^{-7}$ with a 95\% confidence interval of ($-1.074\cdot 10^{-7}$,$-9.076\cdot 10^{-7}$), $p_{2}=0.2332$ with a 95\% confidence interval of (0.2324, 0.2341), $p_{3}=12.96$ with a 95\% confidence interval of (11.26, 14.66), SSE=$1.596\cdot 10^{5}$, R-squared=0.9997, and RMSE=11.41. \\

\noindent A plot of the maximum number of distinct prime factors of the elements of a basis versus the square roots of the primes less than 10000 is given in Figure~\ref{fig:14}.

\begin{figure}[H]
	\includegraphics[clip, trim=0cm 1cm 0cm 0cm, width=\linewidth]{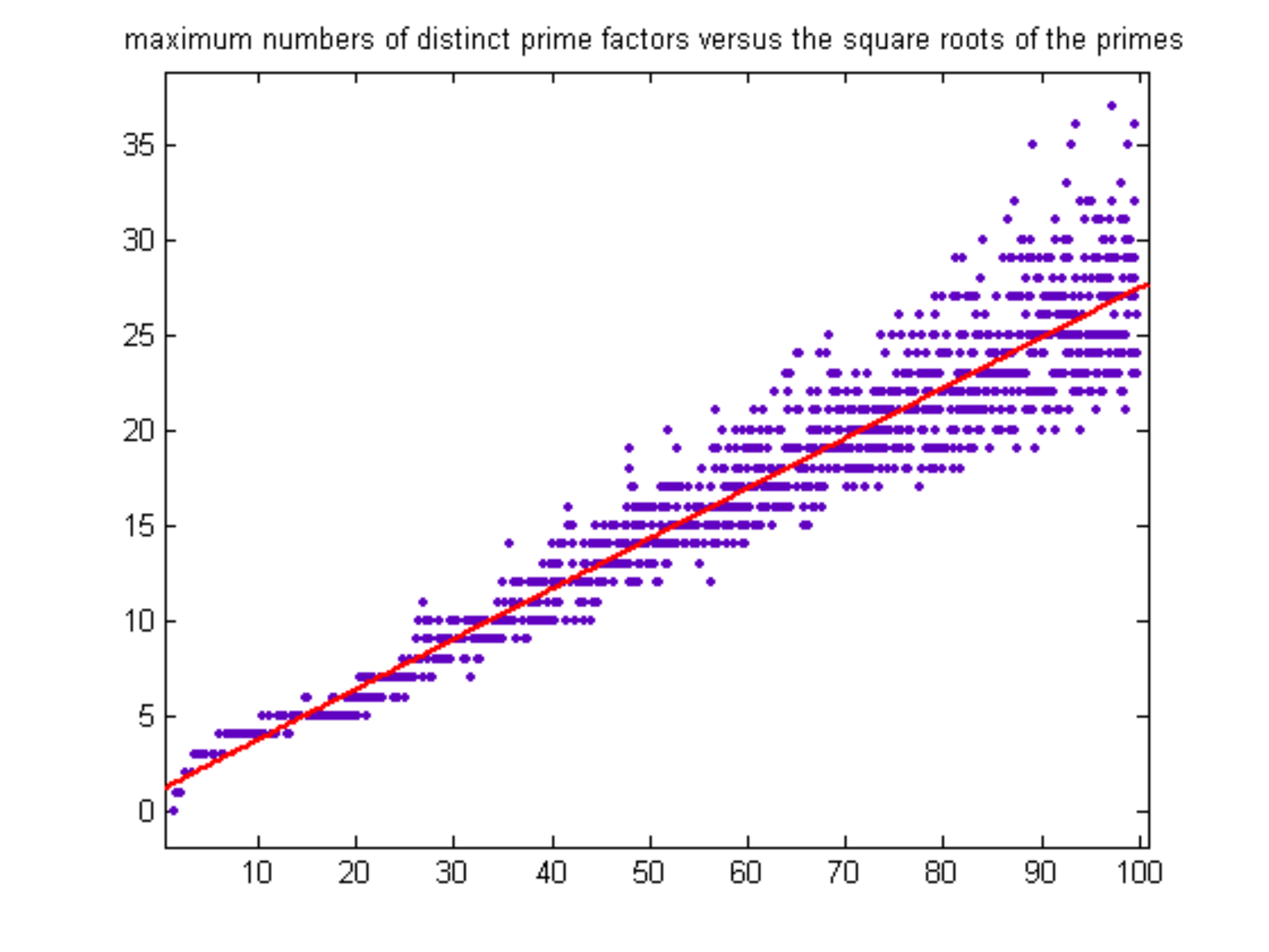}
	\caption{Maximum number of distinct prime factors versus square roots of primes}
	\label{fig:14}
\end{figure}

\noindent For a linear least-squares fit of the curve, $p_{1}=0.2636$ with a 95\% confidence interval of (0.2588, 0.2685), $p_{2}=1.123$ with a 95\% confidence interval of (0.7893, 1.456), SSE=5875, R-squared=0.9017, and RMSE=2.189. \\

\noindent A plot of the logarithm of the histogram of the number of elements in a basis is given in Figure~\ref{fig:15}.

\begin{figure}[H]
	\includegraphics[clip, trim=0cm 1cm 0cm 0cm, width=\linewidth]{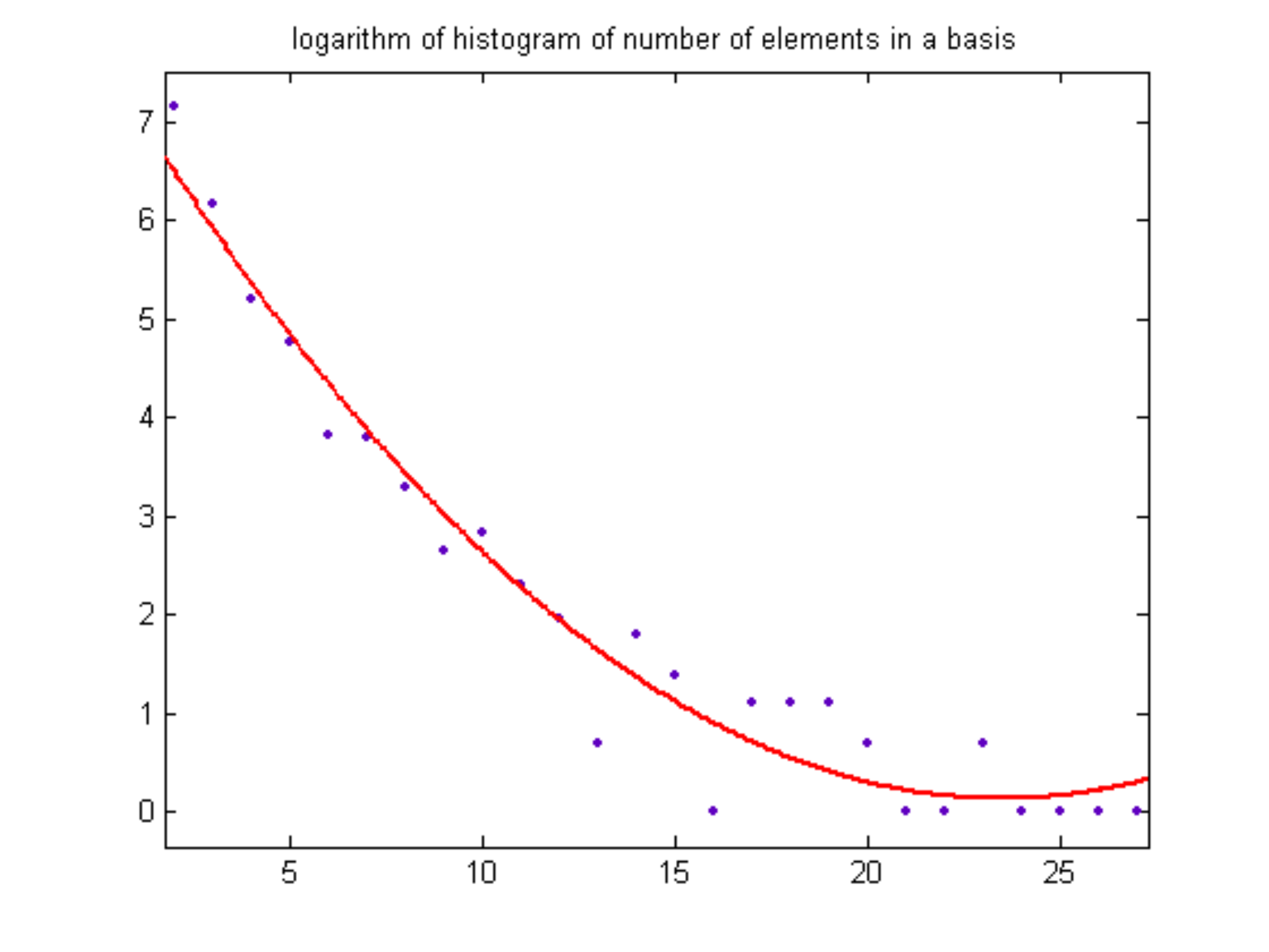}
	\caption{Logarithm of histogram of number of elements in a basis}
	\label{fig:15}
\end{figure}

\noindent For a quadratic least-squares fit of the curve, $p_{1}=0.01388$ with a 95\% confidence interval of (0.01025, 0.01751), $p_{2}=-0.6509$ with a 95\% confidence interval of (-0.759, -0.5428), $p_{3}=7.759$ with a 95\% confidence of (7.7073, 8.445), SSE=4.648, R-squared=0.9568, and RMSE=0.9496.

\newpage

\end{document}